\setlist{nolistsep}
\newcommand{\drm}{\mathrm{d}}
\newcommand{\Cl}{\mathbb{C}\ell}
\newcommand{\hatotimes}{\mathbin{\hat\otimes}}
\DeclareMathOperator{\End}{End}
\DeclareMathOperator{\Span}{Span}
\DeclareMathOperator{\ind}{ind}
\DeclareMathOperator{\tr}{tr}
\newtheorem{thm}{Theorem}[section]
\newtheorem{lem}[thm]{Lemma}
\newtheorem{prp}[thm]{Proposition}
\newtheorem{cor}{Corollary}[thm]
\newtheorem*{clm}{Claim}
\theoremstyle{definition}
\theoremstyle{remark}
\title{Scalar curvature rigidity of parabolically convex domains in hyperbolic spaces}
\author{Chengzhang Sun}
\date{}
\begin{document}
    \maketitle
    \begin{abstract}
        For a parabolically convex domain $M\subseteq \mathbb{H}^n$, $n\ge 3$, we prove that if $f:(N,\bar g)\to (M,g)$ has nonzero degree, where $N$ is spin with scalar curvature $R_N\ge -n(n-1)$, and if $f|_{\partial N}$ does not increase the distance and the mean curvature, then $N$ is hyperbolic, and $\partial N$ is isometric to $\partial M$. This is a partial generalization of Lott's result \cite{lott2021index} to negative lower bounds of scalar curvature. 
    \end{abstract}
    \section{Introduction}

    The rigidity of scalar curvature has been extensively studied. The first result in this direction was Llarull's theorem, which was generalized by Goette and Semmelmann \cite{goette2002scalar} to even dimensional manifolds with nonnegative curvature operator. Later it was generalized to even-dimensional manifolds with smooth boundaries in \cite{lott2021index}, and manifolds with polyhedral boundaries in \cite{wang2021gromov,brendle2024scalar}. By a result of Lohkamp \cite{lohkamp1999scalar}, the positive mass theorem can be deduced by scalar curvature rigidity, see \cite{wang2021gromov} for example. The aforementioned papers have assumed necessary spin conditions. There are also proofs of such rigidity results using minimal surface technics, for example, \cite{li2024dihedral}. 

    For negative scalar curvature, there are some recent results. B\"ar, Brendle, Hanke and Wang \cite{bar2024scalar} proved the scalar curvature rigidity for warped product metrics, which applies to cylinders in hyperbolic spaces. Wang and Xie \cite{wang2023scalar} proved the scalar curvature rigidity for degenerate warped product metrics, which applies to balls in hyperbolic spaces, and they also proved the scalar curvature rigidity for radially convex domains in hyperbolic spaces \cite{wang2023dihedral}. 

    In this paper, we prove a scalar curvature rigidity result for parabolically convex domains. Parabolic convexity is a natural condition proposed by Gromov \cite{gromov2018scalar}. We are using the definition from \cite{chai2023scalar}. Let $\mathbb{R}^n_+=\{(x^1,\ldots,x^n)\in\mathbb{R}^n:x^1>0\}$ be the open upper half space. Let $g_0$ and $g_1$ be the Euclidean metric and the hyperbolic metric on $\mathbb{R}^n_+$ respectively, i.e., 
    $$g_1=\frac{1}{(x^1)^2}g_0=\frac{(\drm x^1)^2+\cdots+(\drm x^n)^2}{(x^1)^2}.$$
    A domain $M\subseteq\mathbb{R}^n_+$ is \emph{parabolically convex} if it is equipped with the hyperbolic metric $g_1$, and $M$ is convex with respect to the Euclidean metric $g_0$. It should be noted that this definition depends on the upper half space model. 

    There are other notions of convexity in hyperbolic geometry that are more intrinsic. A strong convexity is called horo-convexity, see \cite{borisenko1999total} for example. A domain $D$ in a hyperbolic space is horo-convex if every point on the boundary has a supporting horosphere. Since horoballs are parabolically convex, a horo-convex domain is also parabolically convex. 

    In this paper, the domain $M$ is assumed to have a smooth boundary. It is natural to ask if the result holds if $M$ has a polyhedral boundary, and we will answer it in a separate paper. Some results in this direction are \cite{li2020dihedral,chai2023scalar}, with certain dimension restrictions. In \cite{chai2024scalar}, they proved the dihedral angle rigidity in all dimensions under the matching angle condition. 

    The paper is organized as follows. In \S2 we state the main theorem. Some basic calculations about the modified connection are given in \S3. In \S4 we deduce the main theorem assuming the existence of a nontrivial harmonic spinor. In \S5 we prove the existence of such spinors by index calculations. 

    \paragraph{Acknowledgement.} The author expresses gratitude to Xianzhe Dai and Yihan Li for their insightful discussions and to Xiaoxiang Chai for bringing the recent paper \cite{chai2024scalar} to their attention.

    \section{Setup and the main theorem}
    
    Let $\mathbb{H}^n=(\mathbb{R}^n_+,g_1)$ be the hyperbolic space. Assume that $M$ is a compact domain in $\mathbb{H}^n$ with smooth boundary $\partial M$. Let $f:N\to M$ be a smooth map, where $(N,\bar g)$ is an $n$-dimensional compact, connected, spin Riemannian manifold with smooth boundary $\partial N$. Assume that $f(\partial N)\subseteq\partial M$. Let $\partial f:\partial N\to\partial M$ be the restriction of $f$. 

    \begin{thm}\label{thm:main}
        Assume that $n\ge 3$ and 
        \begin{enumerate}
            \item The hypersurface $\partial M$ is convex in $(\mathbb{R}^n_+,g_0)$, i.e., $M$ is parabolically convex. 
            \item The map $\partial f$ is $1$-Lipschitz with respect to $g_1$.
            \item The map $\partial f$ does not increase the mean curvature, i.e., $H_{\partial N}\ge H_{\partial M,g_1}\circ \partial f$. 
            \item The map $f$ has nonzero degree. 
            \item The map $f$ does not increase the scalar curvature, i.e., $R_{N}\ge R_{M,g_1}\circ f=-n(n-1)$. 
        \end{enumerate}
        Then 
        \begin{enumerate}
            \item The manifold $N$ has constant sectional curvature $-1$. 
            \item The hypersurface $\partial N$ has mean curvature $H_{\partial N}= H_{\partial M,g_1}\circ \partial f$.
            \item The map $\partial f$ is an isometry (when restricted to each component of $\partial N$). 
        \end{enumerate}
    \end{thm}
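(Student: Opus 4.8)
The strategy is to run a twisted spin Dirac operator argument of the Llarull--Lott type, adapted to the hyperbolic target through an imaginary-Killing modification of the connection and completed with a local elliptic boundary condition on $\partial N$ chosen so that each of the hypotheses (1)--(3) contributes a term of a definite sign. Concretely, since $M\subseteq\mathbb H^n$ is spin, pull back the spinor bundle of $\mathbb H^n$ along $f$ to a Hermitian bundle with connection $(E,\nabla^E)$ over $N$, and form $\cat S=S_N\hatotimes E$ with the tensor-product connection and Clifford action on the $S_N$-factor. On $\cat S$ introduce a modified connection $\widehat\nabla$ obtained from the tensor connection by adding a zeroth-order Clifford term built from the imaginary Killing spinor equation on $\mathbb H^n$ (the constant curvature $-1$ of $\mathbb H^n$ is what makes this modification compatible), and let $\widehat{\cat D}$ be the associated Dirac-type operator. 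The computations of \S3 should produce a Lichnerowicz-type identity
$$\widehat{\cat D}^{\,*}\widehat{\cat D}=\widehat\nabla^{\,*}\widehat\nabla+\tfrac14\bigl(R_N+n(n-1)\bigr)+\cat Q,$$
where $\cat Q$ is a symmetric endomorphism assembled from $f^*$ of the curvature of $\mathbb H^n$ and the Killing cross-terms, arranged so that $\cat Q\geq 0$ pointwise; combined with (5) the entire zeroth-order term is $\geq 0$, and its vanishing at a point forces $R_N=-n(n-1)$ and $\cat Q=0$ there. Only hypothesis (5) is used in the interior of $N$ --- in contrast to the spherical (Llarull) situation, no contraction of $f$ is required there.

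\emph{Boundary condition and boundary integral.} Along $\partial N$, Clifford multiplication by the inward unit normal $\nu$, post-composed with an involution coming from the $E$-factor, splits $\cat S|_{\partial N}$; imposing a chirality boundary condition $B$ projecting onto one summand makes $(\widehat{\cat D},B)$ Fredholm and self-adjoint. Green's formula then reads
$$\int_N|\widehat{\cat D}\sigma|^2=\int_N|\widehat\nabla\sigma|^2+\int_N\Bigl\langle\bigl(\tfrac14(R_N+n(n-1))+\cat Q\bigr)\sigma,\sigma\Bigr\rangle+\int_{\partial N}\langle\cat B\sigma,\sigma\rangle,$$
with $\cat B$ a self-adjoint endomorphism of $\cat S|_{\partial N}$ expressible through $H_{\partial N}$, the shape operator and mean curvature of $\partial M$ at $\partial f$, and $df|_{\partial N}$. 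Since $\partial f(\partial N)\subseteq\partial M$, the vector $df(\nu)$ splits along $T\partial M\oplus\mathbb R\,\nu_M$; hypothesis (2) controls the tangential block of $df$, hypothesis (1) --- the $g_0$-convexity of $\partial M$, which is precisely the positivity that survives once the Killing modification has absorbed the half-space factor $x^1$ --- controls the shape-operator contribution, and hypothesis (3) controls the mean-curvature contribution, so that altogether $\cat B\geq 0$. Equality $\cat B\sigma=0$ with $\sigma$ nowhere vanishing then forces $d\partial f$ to be a pointwise isometry, $H_{\partial N}=H_{\partial M,g_1}\circ\partial f$, and the $\nu_M$-component of $df(\nu)$ to be a unit vector. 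Carrying out this boundary computation --- in particular understanding exactly why $g_0$-convexity, and not $g_1$-convexity, is the right hypothesis --- is the step I expect to be the main obstacle.

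\emph{Existence of a harmonic spinor and conclusion.} By the index computation of \S5, $\ind(\widehat{\cat D},B)$ is a nonzero multiple of $\deg f$, which by (4) is nonzero, so $\Ker(\widehat{\cat D},B)$ contains a nontrivial $\sigma$. Inserting such a $\sigma$ into the displayed identity forces each nonnegative term to vanish: $\widehat\nabla\sigma\equiv 0$ (hence $\sigma$ is nowhere zero), $R_N\equiv -n(n-1)$ with $\cat Q\equiv 0$, and $\cat B\sigma\equiv 0$ on $\partial N$. From $\widehat\nabla\sigma\equiv 0$ and $\cat Q\equiv 0$, the integrability identity $\widehat R(X,Y)\sigma=0$ expanded via \S3, together with the Baum-type analysis of imaginary Killing spinors, pins down the full curvature tensor of $N$ as that of a space of constant curvature $-1$, which is conclusion (i). The pointwise identity $\cat B\sigma=0$ on $\partial N$ with $\sigma$ nowhere zero yields conclusion (ii) and that $d\partial f$ is an isometry at each point. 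Finally, $\partial f$ is then a proper local isometry, hence a Riemannian covering onto a union of components of $\partial M$; tracking the degree on each component, using (4), upgrades it to an isometry on each component of $\partial N$, giving (iii).
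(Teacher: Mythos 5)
Your overall architecture (modified ``Killing'' connection, chirality-type local boundary condition, integrated Lichnerowicz identity, index $\neq 0$ from $\deg f$, rigidity from a parallel spinor) matches the paper, but two of the steps you treat as routine are exactly where the real content lies, and as you state them they do not go through. First, the interior term: you twist by $f^*\slashed S_{\mathbb H^n}$ with its hyperbolic connection and claim a Lichnerowicz identity with a \emph{nonnegative} remainder $\mathcal Q$ built from the pulled-back curvature of $\mathbb H^n$, using only hypothesis (5) in the interior. With that twist, the curvature term is of Goette--Semmelmann type, $-\tfrac12\sum g(\mathcal R^{M}(f_*\bar\omega_\rho),\omega_r)\,c(\bar\omega_\rho)\otimes c(\omega_r)$; since $\mathcal R^{\mathbb H^n}$ is negative this term has no sign and its size depends on $\drm f$, so without any Lipschitz or area-contraction hypothesis on $f$ in the interior (none is assumed --- only $\partial f$ is $1$-Lipschitz) it cannot be dominated by the Killing cross-terms. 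The only way your scheme closes is if the cancellation is \emph{exact}, i.e.\ the twisting connection is flat; the paper arranges precisely this by working in the upper half-space model and twisting by $f^*\slashed S_{M,g_0}$ (the flat Euclidean spinor bundle) with the constant vector field $V=\tfrac{\epsilon}{2}\partial_{x^1}$, so that Corollary \ref{cor:Lich} has no $f$-dependent interior term at all. Relatedly, the boundary inequality you defer (``why $g_0$-convexity is the right hypothesis'') is the heart of the matter: in the paper it is the chain Lemma \ref{lem:B} $\to$ Proposition \ref{prp:B-estimate} $\to$ Lemma \ref{lem:norm-estimate} $\to$ Lemma \ref{lem:mean}, where Euclidean convexity makes the trace norm of $\mathcal S_{g_0}$ equal to $H_{\partial M,g_0}$, the $1$-Lipschitz bound in $g_1$ gives $\|\mathcal S_{g_0}\circ(\partial f)_*\|_1\le x^1H_{\partial M,g_0}$, and the conformal identity $H_{\partial M,g_1}=x^1H_{\partial M,g_0}+(n-1)\,\drm x^1(\nu_{M,g_0})$ exactly absorbs the extra boundary term $(n-1)\,\drm x^1(\nu_M)$ produced by the Killing modification. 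Asserting $\mathcal B\ge 0$ without this mechanism is not a proof.

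Second, your passage from one parallel section $\sigma$ to constant sectional curvature $-1$ is a genuine gap. The integrability identity $\widehat R(X,Y)\sigma=0$ has the form $(A(X,Y)\otimes 1)\sigma=0$ with $A(X,Y)=R^{\slashed S_N}(X,Y)-\tfrac14[\bar c(X),\bar c(Y)]$, and a single nonvanishing twisted spinor only forces $A$ to kill \emph{some} nonzero spinor pointwise; that yields Einstein ($\mathrm{Ric}=-(n-1)$), not constant curvature $-1$. Baum's classification does not rescue this: it is for complete manifolds, and warped products $dt^2+e^{2t}g_F$ over Ricci-flat, non-flat $F$ carry imaginary Killing spinors without having constant curvature. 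The paper needs the full set of $2^{\lfloor n/2\rfloor}$ linearly independent Killing spinors, obtained by contracting $s$ with constant spinors $\pi$ and proving all these contractions are nonzero and independent (Proposition \ref{prp:nonzero}, Corollary \ref{cor:independent}); that argument is nontrivial, using the boundary condition $\chi s=\epsilon s$ at points where $\nu_f$ attains arbitrary unit directions (surjectivity of the Gauss map of the compact convex $\partial M$ together with surjectivity of $\partial f$), and only then does Proposition \ref{prp:negative} give sectional curvature $-1$. Your sketch omits this entirely. A smaller point: the final upgrade of the local isometry $\partial f$ to an isometry on each component should invoke simple connectivity of $\partial M$ (a compact convex hypersurface, $n-1\ge 2$, hence a sphere); tracking $\deg f$ alone does not exclude a multi-sheeted covering on a boundary component.
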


    \noindent Note that we only assume that $\partial f$ is $1$-Lipschitz. As a result, we cannot conclude the rigidity of $f$ in the interior. The result can also be viewed as a mapping version of \cite[Theorem 1.2]{chai2024scalar}. We will prove the theorem in two cases according to the parity of $n$. In section 3, we will introduce the modified connection and the Lichnerowicz formula, and establish some basic estimates. In section 4, we will deduce the main theorem assuming the existence of a nontrivial harmonic spinor. The proof of the existence of such spinors will be given in section 5, as a result of index-theoretic calculations.

    \section{The modified connection}

    \subsection{The twisted spinor bundle}

    Let $\slashed{S}_N$ be the complex spinor bundle on $N$. For $g=g_0$ or $g_1$, denote the complex spinor bundle on $M$ as $\slashed{S}_{M,g}$. Set $S_g=\slashed{S}_N\otimes f^*\slashed{S}_{M,g}$. The subscript $g$ will be suppressed if the discussion applies to both $g_0$ and $g_1$. 

    There is an induced connection on $S$ given by 
    $$\nabla=\nabla^{\slashed{S}_N}\otimes 1+1\otimes \nabla^{f^*\slashed{S}_M},$$
    where the $1$'s stand for identity maps. In other words, 
    $$\nabla(\alpha\otimes\beta)=\nabla^{\slashed{S}_N}\alpha\otimes\beta+\alpha\otimes\nabla^{f^*\slashed{S}_M}\beta, \quad \alpha\in C^\infty(N,\slashed{S}_N), \quad \beta\in C^\infty(N,f^*\slashed{S}_M).$$

    Let $\Cl_N$ and $\Cl_M$ be the complex Clifford bundles. Denote the Clifford multiplication by $c$. If $n$ is odd, we view $S$ as a module over $\Cl_N\otimes f^*\Cl_M$, i.e., 
    \[c(\varphi\otimes\psi)(\alpha\otimes\beta)=c(\varphi)\alpha\otimes c(\psi)\beta.\]
    The (twisted) Dirac operator on $S$ is defined as 
    $$D=\sum_{\alpha=1}^n c(\bar e_\alpha\otimes 1)\nabla_{\bar e_\alpha},$$
    where $\{\bar e_\alpha:1\le \alpha\le n\}$ is a local orthonormal frame of $TN$. We will write 
    $$\bar c(\bar v)=c(\bar v\otimes 1), \quad c(v)=c(1\otimes v),$$
    where $\bar v\in TN$ and $v\in TM$ or $f^*TM$. Note that 
    $$\bar c(\bar v)c(v)=c(v)\bar c(\bar v), \quad \bar c(\bar v)^2=-|\bar v|^2, \quad \bar c(\bar v)^*=-\bar c(\bar v), \quad c(v)^2=-|v|^2, \quad c(v)^*=-c(v).$$

    If $n$ is even, we consider the graded vector bundle 
    $$S=\slashed{S}_{N}\hatotimes f^*\slashed{S}_{M},$$
    over $N$, where we take into account the even-odd grading on $\slashed{S}_N$ and $\slashed{S}_M$. It is the same vector bundle as in the ungraded case. Now $S$ as a graded module over $\Cl_N\hatotimes f^*\Cl_M=\Cl(TN\oplus f^*TM)$, i.e.,
    \[c(\varphi\otimes\psi)(\alpha\otimes\beta)=(-1)^{\deg \psi\deg\alpha}c(\varphi)\alpha\otimes c(\psi)\beta.\]
    Note that we write $\otimes$ between elements or sections, and write $\hatotimes$ between algebras or bundles. We will use the convention that the metric on $TN\oplus f^*TM$ is $g_N\oplus(-f^*g_M)$, c.f., \cite{cecchini2024rigidity}. The connection on $S$ is the same as in the ungraded case, i.e.,
    $$\nabla(\alpha\otimes\beta)=\nabla^{\slashed{S}_N}\alpha\otimes\beta+\alpha\otimes\nabla^{f^*\slashed{S}_M}\beta, \quad \alpha\in C^\infty(N,\slashed{S}_N), \quad \beta\in C^\infty(N,f^*\slashed{S}_M).$$
    The (twisted) Dirac operator on $S$ is defined as 
    $$D=\sum_{\alpha=1}^{n} c(\bar e_\alpha \otimes 1)\nabla_{\bar e_\alpha}.$$
    In this setting, we will write 
    $$\bar c(\bar v)=c(\bar v\otimes 1), \quad c(v)=c(1\otimes v),$$
    where $\bar v\in TN$ and $v\in TM$ or $f^*TM$. Note that 
    $$\bar c(\bar v)c(v)=-c(v)\bar c(\bar v), \quad \bar c(\bar v)^2=-|\bar v|^2, \quad \bar c(\bar v)^*=-\bar c(\bar v), \quad c(v)^2=|v|^2, \quad c(v)^*=c(v).$$

    In many articles such as \cite{goette2002scalar,lott2021index,wang2021gromov}, it was not specified whether the tensor product is graded or not. The recent work \cite{cecchini2024rigidity} has emphasized that the tensor product should be graded. We find that in the even dimensional case, the graded tensor product is more suitable. 

    \subsection{The Lichnerowicz formula}

    In both cases, $(S,D)$ is a Dirac bundle over $N$ in the sense of \cite{lawson2016spin}. The Lichnerowicz formula for $D$ on the bundle $S$ is given by 
    \begin{equation}\label{eq:Lich-1}
        D^2=\nabla^*\nabla+\frac{1}{4}R_N+c(R^{f^*\slashed S_M}).
    \end{equation}
    Let $\{\bar\omega_{\rho}:1\le\rho\le\binom{n}{2}\}$ and $\{\omega_r:1\le r\le\binom{n}{2}\}$ be local orthonormal frames of $\Lambda^2TN$ and $\Lambda^2TM$. Then in both cases,  
    \begin{equation}\label{eq:twist-curv}
        c(R^{f^*\slashed S_M})=-\frac{1}{2}\sum_{\rho,r}g(\mathcal{R}^M(f_*\bar\omega_\rho),\omega_r)c(\bar\omega_\rho)\otimes c(\omega_r),
    \end{equation}
    c.f., \cite[Equation (1.5)]{goette2002scalar}. Note that the Clifford multiplication by 2-forms is defined by
    $$c(u\wedge v)=c(u)c(v)$$
    for any $u\perp v$. 

    \subsection{The modified connection}

    The idea of using modified connections on spinor bundles dates back to \cite{min1989scalar,andersson1998scalar}. Let $\{\bar e_\alpha:1\le \alpha\le n\}$ be a local orthonormal frame of $TN$. 

    \begin{prp}\label{prp:Lich}
        In both cases, let $V\in C^\infty(N,f^*TM)$. For $X\in C^\infty(N,TN)$, define 
        $$\widehat\nabla_X=\nabla_X+c(X\otimes V):C^\infty(N,S)\to C^\infty(N,S),$$
        and 
        $$\widehat{D}=\sum_{\alpha=1}^n \bar c(\bar e_\alpha)\widehat\nabla_{\bar e_\alpha}.$$
        Then 
        \begin{equation}\label{eq:Lich-2}
            \widehat{D}^*\widehat{D}=\widehat{\nabla}^*\widehat{\nabla}+\frac{1}{4}R_N+c(R^{f^*\slashed S_M})+n(n-1)|V|^2-(n-1)c(\nabla^{f^*TM}V),
        \end{equation}
        where $|V|^2=g(V,V)$ and 
        $$c(\nabla^{f^*TM}V)=\sum_{\alpha=1}^n \bar c(\bar e_\alpha)c(\nabla_{\bar e_\alpha}^{f^*TM}V).$$
    \end{prp}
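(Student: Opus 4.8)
The strategy is a direct Bochner-type computation: expand $\widehat D^* \widehat D$ by inserting the definition $\widehat\nabla_X = \nabla_X + c(X\otimes V)$ and comparing against the unmodified Lichnerowicz formula \eqref{eq:Lich-1}. First I would compute $\widehat\nabla^* \widehat\nabla$ in terms of $\nabla^*\nabla$ and zeroth/first-order terms in $V$. Since $c(X\otimes V)$ is (pointwise) skew-adjoint up to a sign bookkeeping dictated by the Clifford relations recorded in \S3.1 — this is where the two parity cases differ — the cross terms produce a first-order expression, and the quadratic term contributes a multiple of $|V|^2$ coming from $\sum_\alpha \bar c(\bar e_\alpha) c(\bar e_\alpha \otimes V)$-type contractions. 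The key pointwise Clifford identities to be used repeatedly are $\bar c(\bar e_\alpha) \bar c(\bar e_\alpha) = -1$ (no sum) and $\sum_\alpha \bar c(\bar e_\alpha) \bar c(\bar e_\beta) \bar c(\bar e_\alpha) = (n-2)\bar c(\bar e_\beta)$, together with $c(V)^2 = \pm|V|^2$ according to parity; these conspire to give the stated coefficients $n(n-1)$ and $-(n-1)$.

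Concretely, I would write $\widehat D = D + \sum_\alpha \bar c(\bar e_\alpha) c(\bar e_\alpha \otimes V) = D + A$, where $A$ is the order-zero bundle endomorphism $A = \sum_\alpha \bar c(\bar e_\alpha) c(\bar e_\alpha)\otimes c(V)$ (with the appropriate sign). Then $\widehat D^* \widehat D = D^* D + D^* A + A^* D + A^* A$, but it is cleaner to instead expand $\widehat D$ as a first-order operator with the modified connection and directly apply the general Dirac-bundle Lichnerowicz/Weitzenböck identity of \cite{lawson2016spin} to $\widehat\nabla$. One must check that $(S,\widehat D)$ is again a Dirac bundle for the connection $\widehat\nabla$ — i.e., that $\widehat\nabla$ is metric (it is, since $c(X\otimes V)$ is pointwise skew-adjoint) and compatible with Clifford multiplication in the relevant sense — so that the generalized Lichnerowicz formula applies and the curvature term of $\widehat\nabla$ differs from that of $\nabla$ by exactly the $V$-dependent correction. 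The curvature of $\widehat\nabla$ is $\widehat R = R + \nabla(c(\cdot\otimes V)) + c(\cdot\otimes V)\wedge c(\cdot\otimes V)$; contracting into the Clifford-curvature term via $\frac14 R_N + c(R^{f^*\slashed S_M})$ and its $\widehat\nabla$-analogue yields the extra terms. The algebra splits into: (i) the $\nabla(c(\cdot\otimes V))$ piece, which after Clifford contraction gives the $-(n-1)c(\nabla^{f^*TM}V)$ term (the factor $n-1$ is from $\sum_\alpha\bar c(\bar e_\alpha)\bar c(\bar e_\beta)\bar c(\bar e_\alpha)$); and (ii) the quadratic piece, which gives $n(n-1)|V|^2$ after using $c(V)^2 = \pm|V|^2$ and the sign that makes the two parity conventions agree.

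The main obstacle is the sign bookkeeping across the two cases. In the odd case $\bar c$ and $c$ commute and $c(v)^2 = -|v|^2$; in the even (graded) case they anticommute and $c(v)^2 = +|v|^2$, with the metric on $f^*TM$ carrying a sign. One must verify that these two sign flips cancel so that \eqref{eq:Lich-2} holds verbatim in both cases — in particular that the quadratic term is $+n(n-1)|V|^2$ (not $-$) and that the first-order term keeps its sign. I would handle this by carrying an explicit sign $\epsilon = \pm1$ through the computation and checking at the end that every occurrence of $\epsilon$ appears squared or is absorbed by the metric-sign convention on $TN\oplus f^*TM$. A secondary point to be careful about is distinguishing $\nabla^{f^*TM}V$ (the connection on the auxiliary bundle, entering the first-order term) from the spinorial covariant derivative; since $c(X\otimes V)$ depends on $V$ only through its Clifford image, the derivative that survives is precisely $\nabla^{f^*TM}V$, and the antisymmetrization over $\alpha,\beta$ kills the metric (symmetric) part, leaving only the stated contraction. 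Once the signs are pinned down, the identity \eqref{eq:Lich-2} follows by collecting terms.
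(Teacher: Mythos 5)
Your first route (write $\widehat D=D+A$ with $A=\widehat D-D=\sum_\alpha\bar c(\bar e_\alpha)c(\bar e_\alpha\otimes V)=-n\,c(V)$, expand $\widehat D^*\widehat D$ and $\widehat\nabla^*\widehat\nabla$, and compare with \eqref{eq:Lich-1}) is exactly the paper's proof and would work. But the sign fact you base everything on is wrong: $c(X\otimes V)$ is \emph{not} pointwise skew-adjoint, and $\widehat\nabla$ is \emph{not} a metric connection, in either parity. In the odd (ungraded) case $\bar c(X)$ and $c(V)$ are both skew-adjoint and commute, so their product is self-adjoint; in the even (graded) case $\bar c(X)$ is skew-adjoint, $c(V)$ is self-adjoint and they anticommute, so the product is again self-adjoint. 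This self-adjointness is the crux of the computation: it gives $\widehat\nabla^*_{\bar e_\alpha}=-\nabla_{\bar e_\alpha}+c(\bar e_\alpha\otimes V)$ (plus sign), so that the cross terms in $\widehat\nabla^*\widehat\nabla$ collapse to zeroth order, yielding $\widehat\nabla^*\widehat\nabla=\nabla^*\nabla-c(\nabla^{f^*TM}V)+n|V|^2$ and $\widehat D^*\widehat D=D^2-n\,c(\nabla^{f^*TM}V)+n^2|V|^2$; the coefficients in \eqref{eq:Lich-2} then arise as the differences $n^2-n$ and $n-1$ (not from the identity $\sum_\alpha\bar c(\bar e_\alpha)\bar c(\bar e_\beta)\bar c(\bar e_\alpha)=(n-2)\bar c(\bar e_\beta)$, which plays no role). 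Under your skew-adjointness assumption one would instead get $\widehat\nabla^*_{\bar e_\alpha}=-\nabla_{\bar e_\alpha}-c(\bar e_\alpha\otimes V)$, the cross terms would retain a genuine first-order piece $-2c(\bar e_\alpha\otimes V)\nabla_{\bar e_\alpha}$, and the stated identity would not come out. Note also that in the ungraded case $\widehat D^*=D+n\,c(V)\ne\widehat D$, which is why the statement concerns $\widehat D^*\widehat D$ rather than $\widehat D^2$.

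For the same reason your ``cleaner'' second route is not available as stated: $(S,\widehat\nabla,c)$ is not a Dirac bundle in the sense of \cite{lawson2016spin} --- the connection is neither metric (its difference tensor is self-adjoint, not skew) nor compatible with Clifford multiplication, since $[\widehat\nabla_X,\bar c(Y)]-\bar c(\nabla_XY)=[\bar c(X)c(V),\bar c(Y)]\ne 0$ in general --- so the generalized Lichnerowicz formula cannot be applied to $\widehat\nabla$; and even if it could, it would compute $\widehat D^2$, not $\widehat D^*\widehat D$. The repair is simple: drop the Dirac-bundle detour, verify the self-adjointness of $c(X\otimes V)$ in both conventions (checking that $c(V)^2=-|V|^2$ with commuting factors in the odd case, and $c(V)^2=+|V|^2$ with anticommuting factors in the even case, produce identical formulas), and carry out your first-paragraph expansion at a point where $\nabla\bar e_\alpha=0$; that is precisely the paper's argument.
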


    \begin{proof}
        We may assume that $\nabla\bar{e}_\alpha=0$ at some point $y\in N$. By a direct computation, at $y$, 
        \begin{align*}
            \widehat\nabla_{\bar e_\alpha}^*&=-\nabla_{\bar e_\alpha}+c(\bar e_\alpha\otimes V),\\
            \widehat\nabla_{\bar e_\alpha}^*\widehat\nabla_{\bar e_\alpha}&=\nabla_{\bar e_\alpha}^*\nabla_{\bar e_\alpha}-\bar c(\bar e_\alpha)c(\nabla_{\bar e_\alpha}^{f^*TM}V)+|V|^2,\\
            \widehat\nabla^*\widehat\nabla&=\sum_{\alpha=1}^n\widehat\nabla_{\bar e_\alpha}^*\widehat\nabla_{\bar e_\alpha}=\nabla^*\nabla-c(\nabla^{f^*TM}V)+n|V|^2.
        \end{align*}
        Similarly, 
        \begin{align*}
            \widehat D&=D-c(n\otimes V),\\
            \widehat D^*\widehat D&=D^2-n c(\nabla^{f^*TM}V)+n^2|V|^2.
        \end{align*}
        Now \eqref{eq:Lich-2} follows from \eqref{eq:Lich-1}. 
    \end{proof}

    \begin{cor}\label{cor:Lich}
        If $g=g_0$, $V=\pm\frac{1}{2}\frac{\partial}{\partial x^1}$, then 
        $$\widehat{D}^*\widehat{D}=\widehat{\nabla}^*\widehat{\nabla}+\frac{1}{4}\left(R_N+n(n-1)\right).$$
    \end{cor}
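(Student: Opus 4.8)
The plan is to substitute the specific choices $g=g_0$ and $V=\pm\frac12\frac{\partial}{\partial x^1}$ into the general Lichnerowicz-type identity \eqref{eq:Lich-2} and check that three of the curvature/potential terms cancel, leaving only the scalar-curvature term. Concretely, I would first observe that $|V|^2 = g_0(V,V) = \frac14$, so that $n(n-1)|V|^2 = \frac14 n(n-1)$, which supplies exactly the term we want to see in the conclusion. It remains to show that the two remaining terms, $c(R^{f^*\slashed S_M})$ and $-(n-1)c(\nabla^{f^*TM}V)$, vanish identically when $g=g_0$ and $V=\pm\frac12\partial_1$.

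For the twist-curvature term, the key point is that $(M,g_0)$ is flat: it is an open subset of Euclidean $\mathbb{R}^n$. Hence the curvature operator $\mathcal R^M$ appearing in \eqref{eq:twist-curv} is identically zero, so $c(R^{f^*\slashed S_M})=0$. (Equivalently, $f^*\slashed S_{M,g_0}$ is a flat bundle, so its curvature vanishes.) For the term involving $\nabla^{f^*TM}V$, note that $\partial/\partial x^1$ is a parallel vector field on $(\mathbb{R}^n_+,g_0)$ — the Euclidean coordinate vector fields are covariantly constant — and therefore its pullback $V=\pm\frac12\, f^*(\partial/\partial x^1)$ satisfies $\nabla^{f^*TM}_X V = \pm\frac12\, f^*(\nabla^{g_0}_{f_*X}\partial_1) = 0$ for every $X\in C^\infty(N,TN)$. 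Consequently $c(\nabla^{f^*TM}V)=\sum_\alpha \bar c(\bar e_\alpha)c(\nabla^{f^*TM}_{\bar e_\alpha}V)=0$, and the $(n-1)$-multiple of it drops out as well.

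Plugging these three facts into \eqref{eq:Lich-2} gives
$$\widehat D^*\widehat D = \widehat\nabla^*\widehat\nabla + \frac14 R_N + 0 + \frac14 n(n-1) - 0 = \widehat\nabla^*\widehat\nabla + \frac14\bigl(R_N + n(n-1)\bigr),$$
as claimed. I do not expect any genuine obstacle here: the only things to be careful about are the normalization $|V|^2=\frac14$ (which is precisely why the coefficient $\frac12$ was chosen) and confirming that "flat" really does kill both curvature-type terms; the sign ambiguity $\pm$ in $V$ is immaterial since $V$ enters \eqref{eq:Lich-2} only through $|V|^2$ and through the term $\nabla^{f^*TM}V$, which vanishes regardless of sign.
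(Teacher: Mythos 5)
Your proposal is correct and is exactly the argument the paper intends: with $g=g_0$ the flat metric kills $c(R^{f^*\slashed S_M})$, the constant field $V$ has $\nabla^{f^*TM}V=0$, and $|V|^2=\tfrac14$ turns $n(n-1)|V|^2$ into $\tfrac14 n(n-1)$ in \eqref{eq:Lich-2}. Nothing further is needed.
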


    \subsection{The mean curvature and the second fundamental form}

    Denote $\nu_N$ to be the unit inner normal vector field on $\partial N$. Similarly, $\nu_{M,g}$ is the unit inner normal vector field on $\partial M$ with respect to the metric $g=g_0$ or $g_1$. Define 
    $$\nu_f=\nu_{M,g}\circ \partial f\in C^\infty(\partial N,f^*TM).$$
    In the both cases, define
    $$\chi=c(\nu_N\otimes \nu_f):C^\infty(\partial N,S)\to C^\infty(\partial N,S).$$
    The mean curvatures of $\partial N$ and $\partial M$ are computed relative to the inner normal. For example, 
    $$H_{\partial N}=-\sum_{\alpha=1}^{n-1}\left<\nabla_{\bar e_\alpha}\nu_N,\bar e_\alpha\right>,$$
    where $\{\bar e_\alpha:1\le\alpha\le n-1\}$ is a local orthonormal frame of $T\partial N$.

    \begin{lem}\label{lem:mean}
        The mean curvature of $\partial M$ satisfies 
        $$H_{\partial M,g_1}=x^1H_{\partial M,g_0}+(n-1)\,\drm x^1(\nu_{M,g_0}).$$
    \end{lem}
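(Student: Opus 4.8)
The plan is to treat this as a standard conformal-change computation, since $g_1=e^{2\phi}g_0$ with conformal factor $\phi=-\log x^1$, so that $e^{-\phi}=x^1$.

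First I would record the transformation law for the Levi-Civita connection under a conformal change $\tilde g=e^{2\phi}g$, namely
$$\widetilde\nabla_X Y=\nabla_X Y+(X\phi)Y+(Y\phi)X-g(X,Y)\,\mathrm{grad}_g\phi,$$
together with the observation that the $g_1$-unit inner normal is obtained by rescaling, $\nu_{M,g_1}=e^{-\phi}\,\nu_{M,g_0}=x^1\,\nu_{M,g_0}$ (rescaling by a positive function does not change the inward direction). One also notes that if $\{e_\alpha\}_{\alpha=1}^{n-1}$ is a $g_0$-orthonormal frame of $T\partial M$, then $\{x^1 e_\alpha\}$ is a $g_1$-orthonormal frame of $T\partial M$.

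Next I would plug these into the definition of the mean curvature relative to the inner normal, $H_{\partial M,g_1}=-\sum_\alpha g_1\big(\widetilde\nabla_{x^1 e_\alpha}(x^1\nu_{M,g_0}),\,x^1 e_\alpha\big)$. Expanding the Leibniz rule and the conformal connection formula, and using $g_0(e_\alpha,\nu_{M,g_0})=0$, the terms proportional to $e_\alpha\phi$ cancel in pairs, leaving the classical identity
$$H_{\partial M,g_1}=e^{-\phi}\Big(H_{\partial M,g_0}-(n-1)\,\nu_{M,g_0}\phi\Big).$$
Finally I would substitute $\phi=-\log x^1$, so that $e^{-\phi}=x^1$ and $\nu_{M,g_0}\phi=-\tfrac{1}{x^1}\,g_0(\partial/\partial x^1,\nu_{M,g_0})=-\tfrac{1}{x^1}\,\drm x^1(\nu_{M,g_0})$, which yields exactly $H_{\partial M,g_1}=x^1 H_{\partial M,g_0}+(n-1)\,\drm x^1(\nu_{M,g_0})$.

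There is no serious obstacle here; the one thing to be careful about is sign bookkeeping, since the mean curvature is taken with respect to the \emph{inner} normal and the conformal factor $\phi$ is negative. As a sanity check I would verify the formula on the model case of a horizontal horosphere $\{x^1=c\}$, where $H_{\partial M,g_0}=0$, the inner normal of the horoball $\{x^1>c\}$ is $\partial/\partial x^1$, and the formula correctly returns the known value $H_{\partial M,g_1}=n-1$.
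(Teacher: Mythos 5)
Your proposal is correct and follows essentially the same route as the paper: both reduce the lemma to the classical conformal-change formula $H_{\partial M,g_1}=e^{-u}\bigl(H_{\partial M,g_0}-(n-1)\,g_0(\nabla u,\nu_{M,g_0})\bigr)$ with $u=-\ln x^1$, the only difference being that you derive this formula from the conformal transformation of the Levi-Civita connection while the paper simply cites it. Your sign bookkeeping and the horosphere sanity check are both fine.
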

    \begin{proof}
        It is known that after a conformal change $g_1=e^{2u}g_0$, the mean curvature becomes
        $$H_{\partial M,g_1}=e^{-u}(H_{\partial M,g_0}-(n-1)g_0(\nabla u,\nu_{M,g_0})).$$
        Apply this to $u=-\ln x^1$ and we obtain the desired formula. 
    \end{proof}

    Define a Dirac operator on the boundary by
    $$D^{\partial N}=\sum_{\alpha=1}^{n-1}\bar c(\nu_N)\bar c(\bar e_\alpha)\nabla_{\bar e_\alpha}-\frac{1}{2}H_{\partial N}:C^\infty(\partial N,S)\to C^\infty(\partial N,S),$$
    Motivated by \cite{brendle2024scalar}, define 
    $$\mathcal B=\chi D^{\partial N}+D^{\partial N}\chi:C^\infty(\partial N,S)\to C^\infty(\partial N,S),$$
    which plays the role of the second fundamental form. 

    \begin{lem}\label{lem:B}
        In both cases, the operator $\mathcal B$ can be written as 
        $$\mathcal B=\sum_{\alpha=1}^{n-1}\bar c(\bar e_\alpha)c(\nabla_{\bar e_\alpha}^{f^*TM}\nu_f).$$
    \end{lem}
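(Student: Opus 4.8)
The plan is to expand $\mathcal B = \chi D^{\partial N} + D^{\partial N}\chi$ by hand, splitting $D^{\partial N}$ into its principal part $A := \sum_{\alpha=1}^{n-1}\bar c(\nu_N)\bar c(\bar e_\alpha)\nabla_{\bar e_\alpha}$ and the potential term $-\tfrac12 H_{\partial N}$. One first records the identity $\chi = \bar c(\nu_N)\,c(\nu_f)$, valid verbatim in both the ungraded (odd $n$) and graded (even $n$) conventions --- this follows by unwinding the definitions of $c(\nu_N\otimes 1)$ and $c(1\otimes\nu_f)$, the graded sign $(-1)^{\deg\nu_f\deg\alpha}$ cancelling between the two factors. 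Since $H_{\partial N}$ is a scalar function, $\mathcal B = (\chi A + A\chi) - H_{\partial N}\chi$, so it suffices to evaluate $\chi A + A\chi$. Throughout one may work at a point $y\in N$ where $\nabla\bar e_\alpha = 0$.

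For the principal (first-order) part, the Leibniz rule gives $A(\chi\sigma) = \sum_\alpha\bar c(\nu_N)\bar c(\bar e_\alpha)\big((\nabla_{\bar e_\alpha}\chi)\sigma + \chi\,\nabla_{\bar e_\alpha}\sigma\big)$, so the $\nabla_{\bar e_\alpha}\sigma$ terms combine with $\chi A$ into $\sum_\alpha\big(\chi\,\bar c(\nu_N)\bar c(\bar e_\alpha) + \bar c(\nu_N)\bar c(\bar e_\alpha)\,\chi\big)\nabla_{\bar e_\alpha}$. The crucial observation is that this vanishes because $\chi$ anticommutes with each $\bar c(\nu_N)\bar c(\bar e_\alpha)$ for $\alpha\le n-1$: using $\bar c(\nu_N)^2=-1$, $\bar c(\bar e_\alpha)\bar c(\nu_N)=-\bar c(\nu_N)\bar c(\bar e_\alpha)$, and the appropriate (anti)commutation of $\bar c$ with $c$, the operators $\chi\,\bar c(\nu_N)\bar c(\bar e_\alpha)$ and $\bar c(\nu_N)\bar c(\bar e_\alpha)\,\chi$ reduce respectively to $-\bar c(\bar e_\alpha)c(\nu_f)$ and $+\bar c(\bar e_\alpha)c(\nu_f)$; the intermediate arithmetic ($c\bar c=\pm\bar c c$) differs between the two cases, but the conclusion $\{\chi,\bar c(\nu_N)\bar c(\bar e_\alpha)\}=0$ is the same. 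Hence $\mathcal B = \sum_\alpha\bar c(\nu_N)\bar c(\bar e_\alpha)(\nabla_{\bar e_\alpha}\chi) - H_{\partial N}\chi$.

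Finally one computes $\nabla_{\bar e_\alpha}\chi$, where $\nabla$ denotes the connection induced on $\End(S)$; since $\chi$ is a product of Clifford multiplications and $\nabla$ is Clifford-compatible, $\nabla_{\bar e_\alpha}\chi = \bar c(\nabla_{\bar e_\alpha}\nu_N)\,c(\nu_f) + \bar c(\nu_N)\,c(\nabla^{f^*TM}_{\bar e_\alpha}\nu_f)$. For the second summand, $\bar c(\nu_N)\bar c(\bar e_\alpha)\bar c(\nu_N)=\bar c(\bar e_\alpha)$ turns it into exactly $\sum_\alpha\bar c(\bar e_\alpha)\,c(\nabla^{f^*TM}_{\bar e_\alpha}\nu_f)$, the desired formula. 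For the first summand, $\nabla_{\bar e_\alpha}\nu_N$ is tangent to $\partial N$ (differentiate $|\nu_N|^2=1$), and because the shape operator $\langle\nabla_{\bar e_\alpha}\nu_N,\bar e_\beta\rangle$ is symmetric while $\bar c(\bar e_\alpha)\bar c(\bar e_\beta)$ is antisymmetric for $\alpha\neq\beta$, the off-diagonal terms cancel and $\sum_\alpha\bar c(\bar e_\alpha)\bar c(\nabla_{\bar e_\alpha}\nu_N) = -\sum_\alpha\langle\nabla_{\bar e_\alpha}\nu_N,\bar e_\alpha\rangle = H_{\partial N}$; thus this summand contributes $H_{\partial N}\bar c(\nu_N)c(\nu_f) = H_{\partial N}\chi$, which cancels the $-H_{\partial N}\chi$ term, yielding the lemma. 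The main obstacle is purely one of sign bookkeeping: arranging the Clifford relations so that a single argument covers both the graded and ungraded cases, and nailing down the sign in the Clifford trace $\sum_\alpha\bar c(\bar e_\alpha)\bar c(\nabla_{\bar e_\alpha}\nu_N)=H_{\partial N}$, which is precisely what makes the mean-curvature terms cancel.
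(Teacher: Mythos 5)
Your proposal is correct and follows essentially the same route as the paper: expand $\mathcal B=\chi D^{\partial N}+D^{\partial N}\chi$ directly, kill the first-order terms via the anticommutation $\{\chi,\bar c(\nu_N)\bar c(\bar e_\alpha)\}=0$, and cancel the mean-curvature terms using the Clifford trace identity $\sum_\alpha\bar c(\bar e_\alpha)\bar c(\nabla_{\bar e_\alpha}\nu_N)=H_{\partial N}$ (symmetry of the shape operator versus antisymmetry of the Clifford products). You merely spell out the "direct calculation" the paper leaves implicit; the sign bookkeeping in both the graded and ungraded cases checks out.
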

    \begin{proof}
        By a direct calculation, 
        $$\mathcal B=\sum_{\alpha=1}^{n-1}\left(\bar c(\bar e_\alpha)c(\nabla_{\bar e_\alpha}^{f^*TM}\nu_f)+\bar c(\bar e_\alpha)\bar c(\nabla_{\bar e_\alpha}^{TN}\nu_N)\chi\right)-H_{\partial N}\chi.$$
        Note that 
        \begin{align*}
            \sum_{\alpha=1}^{n-1}\bar c(\bar e_\alpha)\bar c(\nabla_{\bar e_\alpha}^{TN}\nu_N)&=\sum_{\alpha,\beta=1}^{n-1}\left<\nabla_{\bar e_\alpha}^{TN}\nu_N,\bar e_\beta\right>\bar c(\bar e_\alpha)\bar c(\bar e_\beta)\\
            &=-\sum_{\alpha=1}^{n-1}\left<\nabla_{\bar e_\alpha}^{TN}\nu_N,\bar e_\alpha\right>+\sum_{\alpha\ne\beta}\left<\nabla_{\bar e_\alpha}^{TN}\nu_N,\bar e_\beta\right>\bar c(\bar e_\alpha)\bar c(\bar e_\beta)\\
            &=H_{\partial N}.
        \end{align*}
        Here, the second summation is zero because if one swaps $\alpha,\beta$, the term becomes its negative. Hence, the desired result follows. 
    \end{proof}

    \subsection{Some linear algebra}

    Let $T:\mathbb{R}^n\to\mathbb{R}^m$ be a linear map. Then there is an orthonormal basis $\{v_j:1\le j\le n\}$ of $\mathbb{R}^n$ and an orthonormal basis $\{w_i:1\le i\le m\}$ of $\mathbb{R}^m$ such that 
    \[Tv_i=\sigma_i w_i, \quad \sigma_1\ge\cdots\ge\sigma_r>\sigma_{r+1}=\cdots=0.\]
    The numbers $\sigma_i=\sigma_i(T)$ are the singular values of $T$, and $r$ is the rank of $T$. It is well known that 
    \begin{equation}\label{eq:max-min}
        \sigma_i=\max\{\min\{|Tu|:u\in U,|u|=1\}:U\subseteq\mathbb{R}^n,\dim U=i\},
    \end{equation}
    where $|\cdot|$ is the Euclidean norm. The \emph{Schattern $p$-norm} of $T$ is defined as
    $$\|T\|_p=\left(\sum_{i=1}^r\sigma_i^p\right)^{1/p}, \quad 1\le p<\infty. $$
    We will only use the $1$-norm and $2$-norm, which are also known as the trace norm and the Frobenius norm respectively. For $p=\infty$, $\|T\|_\infty=\sigma_1(T)$ is the operator norm of $T$. The following inequality is well known, but we also need the rigidity statement.
    \begin{lem}
        If $T:\mathbb{R}^n\to\mathbb{R}^m$ and $S:\mathbb{R}^m\to\mathbb{R}^k$ are linear maps, then 
        $$\|ST\|_p\le\sigma_1(S)\|T\|_p.$$
        Assume that $T$ is surjective and $p<\infty$. Then the equality holds if and only if $\sigma_1(S)=\sigma_m(S)$.
    \end{lem}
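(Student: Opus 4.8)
The plan is to prove both the inequality and the equality case by reducing everything to the singular value characterization \eqref{eq:max-min}. For the inequality, I would argue that for each $i$, $\sigma_i(ST) \le \sigma_1(S)\,\sigma_i(T)$. Indeed, pick the $i$-dimensional subspace $U \subseteq \mathbb{R}^n$ realizing the max in \eqref{eq:max-min} for $T$; then for every unit $u \in U$ we have $|STu| \le \sigma_1(S)|Tu|$, so $\min\{|STu| : u\in U, |u|=1\} \le \sigma_1(S) \cdot \min\{|Tu| : u\in U, |u|=1\}$ is not quite the right direction — instead I would use that $T$ maps $U$ into an $i$-dimensional subspace $T(U)$ (using that $\sigma_i(T)>0$ when $i\le r$, and handling $i>r$ trivially since both sides vanish), and run the min-max for $ST$ over the subspace $T(U)$. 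More cleanly: $\min\{|STu|:u\in U,|u|=1\}$ relates to $\sigma_i$ via the image, so it is cleanest to cite the standard submultiplicativity $\sigma_i(AB)\le \sigma_1(A)\sigma_i(B)$ and then sum $p$-th powers: $\|ST\|_p^p = \sum_i \sigma_i(ST)^p \le \sigma_1(S)^p \sum_i \sigma_i(T)^p = \sigma_1(S)^p\|T\|_p^p$, giving $\|ST\|_p \le \sigma_1(S)\|T\|_p$.

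For the equality case, assume $T$ is surjective (so $r = m$, all of $\sigma_1(T),\ldots,\sigma_m(T)$ are positive) and $p < \infty$. The ``if'' direction is immediate: if $\sigma_1(S) = \sigma_m(S)$, then $S = \sigma_1(S)\,O$ for some isometry $O$ onto its image composed with... more precisely $|Sw| = \sigma_1(S)|w|$ for all $w \in \mathbb{R}^m$ (since all singular values coincide), hence $|STu| = \sigma_1(S)|Tu|$ for all $u$, and it follows directly from \eqref{eq:max-min} that $\sigma_i(ST) = \sigma_1(S)\sigma_i(T)$ for every $i$, so the Schatten norms match. For the ``only if'' direction, suppose $\sigma_1(S) > \sigma_m(S) \ge 0$. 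Then there is a unit vector $w_m \in \mathbb{R}^m$ with $|Sw_m| = \sigma_m(S) < \sigma_1(S)$. Since $T$ is surjective, $w_m = Tu$ for some nonzero $u$; I want to show this forces a strict inequality $\sigma_i(ST) < \sigma_1(S)\sigma_i(T)$ for at least one index $i$, which by the summation above gives $\|ST\|_p < \sigma_1(S)\|T\|_p$. The cleanest route: use the SVD of $T$, $Tv_j = \sigma_j(T)w_j'$ with $\{w_j'\}$ an orthonormal basis of $\mathbb{R}^m$; then $\|ST\|_p^p = \sum_j \sigma_j(T)^p\,(\text{contribution})$ — but singular values of $ST$ are not simply $\sigma_j(T)|Sw_j'|$ in general, so I should instead compare via $\|ST\|_p^p = \operatorname{tr}\big((STT^*S^*)^{p/2}\big)$ and use that $TT^* \succeq \sigma_m(T)^2 I$ while $T^*T$ has full rank; the strictness then comes from the fact that $S^*S$ has an eigenvalue $\sigma_m(S)^2$ strictly below $\sigma_1(S)^2 I$, and $TT^*$ is positive definite, so $STT^*S^* \precneq \sigma_1(S)^2\, TT^*$ with the inequality strict on a nontrivial subspace, whence monotonicity and strict monotonicity of $X \mapsto \operatorname{tr}(X^{p/2})$ on positive semidefinite matrices (for $p<\infty$) yields the strict inequality.

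The main obstacle is the ``only if'' direction of the rigidity statement: one must convert the pointwise gap $\sigma_m(S) < \sigma_1(S)$ into a strict gap between the \emph{sums} of $p$-th powers of singular values, and this genuinely uses both that $T$ is surjective (so that the ``bad'' direction of $S$ is actually hit by $T$, preventing it from being harmlessly annihilated) and that $p < \infty$ (so that no single dominant singular value can mask the deficit — for $p=\infty$ the statement is false, as $\|ST\|_\infty = \sigma_1(S)\|T\|_\infty$ can hold with $\sigma_1(S)$ strictly larger than $\sigma_m(S)$). I would handle this via the trace-power formulation and the operator-monotonicity/strict-monotonicity of $A \mapsto \operatorname{tr}(A^{p/2})$ for $A \succeq 0$, $p \ge 1$, which is standard; the surjectivity of $T$ enters precisely to guarantee $TT^* \succ 0$ so that $\sigma_1(S)^2 TT^* - STT^*S^*$ is positive semidefinite and nonzero, forcing the strict trace inequality.
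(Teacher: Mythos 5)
Your inequality half and the ``if'' half of the rigidity statement are fine (and your side remark about $p=\infty$ correctly explains why finiteness of $p$ is essential); incidentally, your worry about the min--max step is unfounded: taking a unit $u_0\in U$ minimizing $|Tu|$ gives $\min_{u\in U,|u|=1}|STu|\le|STu_0|\le\sigma_1(S)|Tu_0|=\sigma_1(S)\min_{u\in U,|u|=1}|Tu|$, and maximizing over $U$ in \eqref{eq:max-min} already yields $\sigma_i(ST)\le\sigma_1(S)\sigma_i(T)$. The genuine gap is in the ``only if'' direction: the operator inequality you lean on, $STT^*S^*\preceq\sigma_1(S)^2\,TT^*$, is not valid. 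The two sides act on different spaces ($\mathbb{R}^k$ versus $\mathbb{R}^m$) unless $k=m$, and even when $k=m$ it is false: it would say $|T^*S^*x|\le\sigma_1(S)|T^*x|$ for all $x$, which fails for $T=\mathrm{diag}(10,1)$ and $S$ the map $e_1\mapsto e_2$, $e_2\mapsto\frac12 e_1$ (take $x=e_2$; here $\sigma_1(S)=1$ and $T$ is even invertible). Relatedly, ``$T^*T$ has full rank'' is wrong when $n>m$; surjectivity of $T$ makes $TT^*$ positive definite, not $T^*T$. The repair is to conjugate on the domain side: from $S^*S\preceq\sigma_1(S)^2 I_m$ one gets $(ST)^*(ST)=T^*(S^*S)T\preceq\sigma_1(S)^2\,T^*T$, and $\|ST\|_p^p=\tr\bigl(((ST)^*(ST))^{p/2}\bigr)$, $\sigma_1(S)^p\|T\|_p^p=\tr\bigl((\sigma_1(S)^2T^*T)^{p/2}\bigr)$. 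If $\sigma_m(S)<\sigma_1(S)$, choose a unit $w$ with $|Sw|=\sigma_m(S)$ and, by surjectivity, $u$ with $Tu=w$; then $\bigl<(\sigma_1(S)^2T^*T-T^*S^*ST)u,u\bigr>=\sigma_1(S)^2-\sigma_m(S)^2>0$, so the difference is a nonzero positive semidefinite operator. Eigenvalue monotonicity under the Loewner order gives termwise domination of eigenvalues, and termwise equality is impossible (it would force equal traces, contradicting that the difference has positive trace); since $t\mapsto t^{p/2}$ is strictly increasing for $p<\infty$, this yields the strict inequality $\|ST\|_p<\sigma_1(S)\|T\|_p$. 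With that correction your contrapositive argument closes.

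For comparison, the paper argues directly from \eqref{eq:max-min}: after normalizing $\sigma_1(S)=1$, equality of the $p$-norms forces $\sigma_i(ST)=\sigma_i(T)$ for every $i$, and an induction on singular vectors shows that the range of $T$ is contained in $W=\{w:|Sw|=|w|\}$; surjectivity then gives $W=\mathbb{R}^m$, i.e.\ $\sigma_1(S)=\sigma_m(S)$. Your (repaired) trace-power route is a legitimate alternative that avoids this induction at the cost of invoking Weyl's monotonicity of eigenvalues; both proofs use surjectivity at exactly the same point, namely to guarantee that the deficient direction of $S$ is actually hit by the range of $T$.
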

    \begin{proof}
        The inequality follows directly from \eqref{eq:max-min}, since for all $1\le i\le r$, 
        $$\sigma_i(ST)\le \sigma_1(S)\sigma_i(T).$$ 
        Now suppose equality holds and $T$ is surjective. Then $\sigma_i(T)=\sigma_i(ST)$ and we denote this common value as $\sigma_i$. We need to show that for all $v\in\mathbb{R}^m$, $|Sv|=\sigma_1(S)|v|$. 
        
        If $S=0$, the statement is clear. Hence, by rescaling, we may assume $\sigma_1(S)=1$. Let
        $$W=\{w\in\mathbb{R}^m:|Sw|=|w|\}.$$
        It is easy to see that $W$ is the subspace of $\mathbb{R}^m$ spanned by those singular vectors of $S$ corresponding to singular value $1$. It suffices to show that the range of $T$ is contained in $W$. 

        Suppose that $v_1\in\mathbb{R}^n$ is a unit vector with $|STv_1|=\sigma_1$. We have $Tv_1\in W$. For if $Tv_1\notin W$, then $|STu_1|<|Tu_1|\le\sigma_1$, which is a contradiction. 

        Suppose that we have shown that, for $1\le i<i_0$, if $v_i\in\mathbb{R}^n$ is a unit vector with $|STv_i|=\sigma_i$, then $Tv_i\in W$. We aim to prove that if $v_{i_0}\in\mathbb{R}^n$ is a unit vector with $|STv_{i_0}|=\sigma_{i_0}$, then $Tv_{i_0}\in W$. 

        If $\sigma_{i_0}=\sigma_{i_0-1}$, then there is nothing to prove. Thus, we assume that $\sigma_{i_0}<\sigma_{i_0-1}$. In particular, we can find an orthonormal system $\{v_i:1\le i<i_0\}\subset\mathbb{R}^n$ such that $|STv_i|=\sigma_i$. By \eqref{eq:max-min}, 
        $$\sigma_{i_0}\ge\min\{|Tv|:v\in\Span\{v_1,\ldots,v_{i_0}\},|v|=1\}.$$
        Take a minimizer $v$. If $Tv\notin W$, then $|STv|<|Tv|\le\sigma_{i_0}$ is a contradiction. Hence, $Tv\in W$. Since $Tv_1,\ldots,Tv_{i_0-1}\in W$ and $v\notin\Span\{v_1,\ldots,v_{i_0-1}\}$, we conclude that $Tv_{i_0}\in W$. 
    \end{proof}

    By taking transpose, we immediately get
    \begin{lem}\label{lem:norm}
        If $T:\mathbb{R}^n\to\mathbb{R}^m$ and $S:\mathbb{R}^m\to\mathbb{R}^k$ are linear maps, then 
        $$\|ST\|_p\le\sigma_1(T)\|S\|_p.$$
        Assume that $S$ is injective and $p<\infty$. Then the equality holds if and only if $\sigma_1(T)=\sigma_m(T)$.
    \end{lem}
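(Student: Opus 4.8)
The plan is to deduce this directly from the preceding lemma by passing to transposes. Write $A^\top:\mathbb{R}^m\to\mathbb{R}^n$ for the transpose of a linear map $A:\mathbb{R}^n\to\mathbb{R}^m$, and recall two standard facts we will lean on: first, $A$ and $A^\top$ have the same singular values, so $\sigma_1(A^\top)=\sigma_1(A)$ and $\|A^\top\|_p=\|A\|_p$ for every $p$; second, $A$ is injective if and only if $A^\top$ is surjective. Since $(ST)^\top=T^\top S^\top$ with $T^\top:\mathbb{R}^m\to\mathbb{R}^n$ and $S^\top:\mathbb{R}^k\to\mathbb{R}^m$, we may apply the preceding lemma to the composition $T^\top S^\top$, where now $T^\top$ plays the role of the \emph{outer} factor (the one called $S$ there) and $S^\top$ plays the role of the \emph{inner} factor (the one called $T$ there).

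Carrying this out, the inequality of the preceding lemma gives $\|T^\top S^\top\|_p\le\sigma_1(T^\top)\,\|S^\top\|_p$, and rewriting both sides via $\|(ST)^\top\|_p=\|ST\|_p$, $\|S^\top\|_p=\|S\|_p$, $\sigma_1(T^\top)=\sigma_1(T)$ yields exactly $\|ST\|_p\le\sigma_1(T)\|S\|_p$.

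For the rigidity statement, assume $S$ injective, so that $S^\top$ is surjective, and $p<\infty$. The rigidity half of the preceding lemma, applied with the surjective inner factor $S^\top$, says that equality in $\|T^\top S^\top\|_p\le\sigma_1(T^\top)\|S^\top\|_p$ holds if and only if $\sigma_1(T^\top)=\sigma_m(T^\top)$, i.e.\ all singular values of $T^\top$ coincide. Translating back through $\sigma_i(T^\top)=\sigma_i(T)$, this is the condition $\sigma_1(T)=\sigma_m(T)$ (with the usual convention $\sigma_i(T)=0$ once the index exceeds the rank, so that $\sigma_m(T)$ is meaningful also when $m>n$), and equality in $\|ST\|_p\le\sigma_1(T)\|S\|_p$ is equivalent to it.

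There is no genuine difficulty here; the only point requiring care is the bookkeeping — matching each factor of $T^\top S^\top$ with the correct slot of the preceding lemma, checking that ``$S$ injective'' dualizes precisely to the surjectivity hypothesis that lemma needs, and keeping track of the index conventions for singular values in the degenerate range $m\neq n$.
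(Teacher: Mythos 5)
Your proposal is correct and is exactly the paper's argument: the paper derives this lemma from the preceding one ``by taking transpose,'' and you have simply written out that duality carefully, with the right matching of inner/outer factors, the injective--surjective correspondence under transposition, and the singular-value conventions. Nothing further is needed.
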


    The definitions and results above can be easily generalized to any finite dimensional inner product spaces. For example, if $f:(N,\bar g)\to (M,g)$ is a smooth map between Riemannian manifolds, then
    $$\|\drm f\|_p:N\to\mathbb{R}$$
    is well defined. In our setting, there are two metrics $g_1=\frac{1}{(x^1)^2}g_0$ on $M$. It is easy to see that 
    \begin{equation}
        \|\drm f\|_{p,g_1}=\frac{1}{x^1}\|\drm f\|_{p,g_0}. 
    \end{equation}

    \subsection{Estimate of the second fundamental form}

    Let $g=g_0$ or $g_1$. Recall that the shape operator of $\partial M$ with respect to $g$ is 
    $$\mathcal S_g:T_p\partial M\to T_p\partial M,\quad X\mapsto -\nabla_X^{TM,g}\nu_{M,g},$$
    and the mean curvature is 
    $$H_{\partial M,g}=\tr \mathcal S_g.$$
    We assume that $M$ is convex in $(\mathbb{R}^n_+,g_0)$, i.e., $\|\mathcal S_{g_0}\|_1=\tr \mathcal S_{g_0}=H_{\partial M,g_0}$. 

    \begin{prp}\label{prp:B-estimate}
        For $y\in\partial N$, define 
        $$\drm\hat\nu_f(y):T_y\partial N\to T_{f(y)}\partial M, \quad X\mapsto \mathcal S_g(f_*X).$$ 
        Then for any $s\in C^\infty(\partial N,S)$, 
        $$|\left<\mathcal{B}s,s\right>|\le \|\drm\hat\nu_f\|_1\cdot|s|^2.$$
    \end{prp}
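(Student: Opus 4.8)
The plan is to reduce the estimate to a singular value decomposition of $\drm\hat\nu_f$ at a point, together with the observation that a product $\bar c(\bar e)\,c(\epsilon)$ of Clifford multiplications by unit vectors has operator norm one.

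First I would rewrite $\mathcal{B}$ pointwise. Since $\nu_f=\nu_{M,g}\circ\partial f$, the definition of the pullback connection gives $\nabla^{f^*TM}_{\bar e_\alpha}\nu_f=\nabla^{TM,g}_{f_*\bar e_\alpha}\nu_{M,g}$ for $\bar e_\alpha\in T_y\partial N$; as $|\nu_{M,g}|\equiv1$ this derivative is tangent to $\partial M$, and by the definition of the shape operator it equals $-\mathcal{S}_g(f_*\bar e_\alpha)=-\drm\hat\nu_f(\bar e_\alpha)$. Substituting into Lemma~\ref{lem:B} yields
$$\mathcal{B}=-\sum_{\alpha=1}^{n-1}\bar c(\bar e_\alpha)\,c\bigl(\drm\hat\nu_f(\bar e_\alpha)\bigr),$$
and since the right-hand side is independent of the chosen orthonormal frame of $T\partial N$, I am free to pick it adapted to $\drm\hat\nu_f$.

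Next, fix $y\in\partial N$ and apply the singular value decomposition recalled above to $\drm\hat\nu_f(y)\colon (T_y\partial N,\bar g)\to(T_{f(y)}\partial M,g)$: choose a $\bar g$-orthonormal basis $\{\bar e_\alpha\}_{\alpha=1}^{n-1}$ of $T_y\partial N$ and a $g$-orthonormal basis $\{\epsilon_\alpha\}$ of $T_{f(y)}\partial M$ with $\drm\hat\nu_f(\bar e_\alpha)=\sigma_\alpha\epsilon_\alpha$, $\sigma_\alpha\ge0$. Then $\mathcal{B}_y=-\sum_\alpha\sigma_\alpha\,\bar c(\bar e_\alpha)c(\epsilon_\alpha)$ and $\|\drm\hat\nu_f(y)\|_1=\sum_\alpha\sigma_\alpha$. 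Each $\bar c(\bar e_\alpha)c(\epsilon_\alpha)$ is unitary: from the Clifford relations listed above, $\bar c(\bar e_\alpha)^*\bar c(\bar e_\alpha)=1$ and $c(\epsilon_\alpha)^*c(\epsilon_\alpha)=1$ since $\bar e_\alpha,\epsilon_\alpha$ are unit vectors. (A short computation, handling the odd and even cases separately, even shows $\bar c(\bar e_\alpha)c(\epsilon_\alpha)$ is a self-adjoint involution in each case, which is convenient for the later rigidity discussion; for the present inequality only the bound $\|\bar c(\bar e_\alpha)c(\epsilon_\alpha)\|_\infty=1$ is used.) Consequently $|\langle\bar c(\bar e_\alpha)c(\epsilon_\alpha)s,s\rangle|\le|s|^2$ at $y$.

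Finally I would conclude by the triangle inequality:
$$|\langle\mathcal{B}s,s\rangle|\le\sum_{\alpha=1}^{n-1}\sigma_\alpha\,\bigl|\langle\bar c(\bar e_\alpha)c(\epsilon_\alpha)s,s\rangle\bigr|\le\Bigl(\sum_{\alpha=1}^{n-1}\sigma_\alpha\Bigr)|s|^2=\|\drm\hat\nu_f\|_1\,|s|^2.$$
There is no serious obstacle; the only point demanding care is the sign bookkeeping separating the odd case (where $\bar c$ and $c$ commute and $c(\epsilon_\alpha)^2=-1$) from the even case (where they anticommute and $c(\epsilon_\alpha)^2=+1$), but in both cases the conclusion that $\bar c(\bar e_\alpha)c(\epsilon_\alpha)$ is unitary is the same, and the rest is the standard singular value estimate.
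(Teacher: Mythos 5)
Your proposal is correct and takes essentially the same approach as the paper: rewrite $\mathcal B$ via Lemma \ref{lem:B} and $\nabla^{f^*TM}_{\bar e_\alpha}\nu_f=-\drm\hat\nu_f(\bar e_\alpha)$, diagonalize $\drm\hat\nu_f$ by a singular value decomposition at the point, and bound each term using $|\langle\bar c(v)c(w)s,s\rangle|\le|s|^2$ (the paper gets this from $(\bar c(v)c(w))^2=1$, you from unitarity --- the same fact in both parities). No gap; the argument matches the paper's proof step for step.
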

    \begin{proof}
        Fix any $y\in\partial N$. Take an orthonormal basis $\{v_1,\ldots,v_{n-1}\}$ of $T_y\partial N$ and an orthonormal basis $\{w_1,\ldots,w_{m-1}\}$ of $T_{f(y)}\partial M$, such that 
        $$\drm\hat\nu_f(v_i)=\sigma_i w_i,\quad \sigma_1\ge\cdots\ge\sigma_r>\sigma_{r+1}=\cdots=0.$$
        By Lemma \ref{lem:B}, 
        $$\mathcal{B}=\sum_{i=1}^{n-1}\bar c(v_i)c(\nabla_{v_i}^{f^*TM}\nu_f).$$
        Since $M$ is equipped with the Euclidean metric, we have, at the point $y$, 
        $$\nabla_{v_i}^{f^*TM}\nu_f=\nabla^{TM}_{f_*v_i}\nu_{M,g_0}=-\drm\hat\nu_f(v_i)=-\sigma_iw_i.$$
        Thus, at the point $y$, 
        $$\mathcal B=-\sum_{i=1}^{r}\sigma_i \bar c(v_i)c(w_i).$$
        Since $(\bar c(v_i) c(w_i))^2=1$, we have 
        $$|\left<\bar c(v_i) c(w_i)s,s\right>|\le |s|^2,$$
        for any $s\in S_y$. Hence, 
        \begin{align*}
            |\left<\mathcal Bs,s\right>|\le \sum_{i=1}^r\sigma_i|s|^2&=\|\drm\hat\nu_f\|_1\cdot|s|^2.\qedhere
        \end{align*}
    \end{proof}

    Next we estimate the norm of $\drm\hat\nu_f$. 

    \begin{lem}\label{lem:norm-estimate}
        Under the assumptions of the main theorem \ref{thm:main},
        $$\|\drm\hat\nu_{f,g_0}\|_{1,g_0}\le x^1 H_{\partial M,g_0}\circ \partial f.$$
        The equality holds at some $y\in\partial N$ if and only if $\partial f$ is a Riemannian submersion at $y$. 
    \end{lem}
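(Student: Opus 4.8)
The plan is to factor the differential $\drm\hat\nu_{f,g_0}$ through the Euclidean shape operator of $\partial M$ and then invoke the submultiplicativity of the trace norm together with parabolic convexity.

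First I would unwind the definition. Since the target $M$ carries the Euclidean metric in this computation, $\drm\hat\nu_{f,g_0}(y)$ is literally the composite
\[
T_y\partial N \xrightarrow{\ \drm(\partial f)_y\ } T_{f(y)}\partial M \xrightarrow{\ \mathcal{S}_{g_0}\ } T_{f(y)}\partial M ,
\]
with $\bar g$ on the source and $g_0$ on the target throughout. Applying Lemma \ref{lem:norm} with $T=\drm(\partial f)_y$ and $S=\mathcal{S}_{g_0}$ gives $\|\drm\hat\nu_{f,g_0}\|_{1}\le\sigma_1(\drm(\partial f)_y)\,\|\mathcal{S}_{g_0}\|_{1}$. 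Parabolic convexity means $\mathcal{S}_{g_0}\ge 0$, so $\|\mathcal{S}_{g_0}\|_1=\tr\mathcal{S}_{g_0}=H_{\partial M,g_0}$. Finally, $\partial f$ being $1$-Lipschitz for $g_1$ says that $\sigma_1$ of $\drm(\partial f)_y$ measured with $g_1$ on the target is $\le 1$; by the scaling identity $\|\cdot\|_{p,g_1}=(x^1)^{-1}\|\cdot\|_{p,g_0}$ this is the same as $\sigma_1(\drm(\partial f)_y)\le x^1$. Multiplying the two bounds gives $\|\drm\hat\nu_{f,g_0}\|_1\le x^1 H_{\partial M,g_0}$, evaluated along $\partial f$.

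For the equality discussion the ``if'' direction is a short computation: if $\partial f$ is a Riemannian submersion at $y$ then, since $\dim\partial N=\dim\partial M=n-1$, the map $\drm(\partial f)_y$ is invertible and a $g_1$-isometry, i.e.\ $\drm(\partial f)_y=x^1 O$ with $O$ a $g_0$-orthogonal map, so $\drm\hat\nu_{f,g_0}=x^1\,\mathcal{S}_{g_0}\circ O$ has the same singular values as $x^1\mathcal{S}_{g_0}$ and equality holds. For the ``only if'' direction I would split $T_{f(y)}\partial M=\Ker\mathcal{S}_{g_0}\oplus W$ orthogonally, $W$ the image of $\mathcal{S}_{g_0}$, let $P$ be the orthogonal projection onto $W$, and write $\drm\hat\nu_{f,g_0}=(\mathcal{S}_{g_0}|_W)\circ(P\circ\drm(\partial f)_y)$, where $\mathcal{S}_{g_0}|_W$ is injective and still has trace norm $H_{\partial M,g_0}$. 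Equality forces the chain $x^1 H_{\partial M,g_0}=\|\drm\hat\nu_{f,g_0}\|_1\le\sigma_1(P\circ\drm(\partial f)_y)\,H_{\partial M,g_0}\le x^1 H_{\partial M,g_0}$ to collapse, and the rigidity clause of Lemma \ref{lem:norm} (valid because $\mathcal{S}_{g_0}|_W$ is injective) forces every singular value of $P\circ\drm(\partial f)_y$ to equal $x^1$. Feeding this back into the bound $\sigma_1(\drm(\partial f)_y)\le x^1$ and using $|Pv|\le|v|$ shows that $\drm(\partial f)_y$ restricts to an $x^1$-scaled isometry of the horizontal subspace $(\Ker(P\circ\drm(\partial f)_y))^\perp$ onto $W$; when $\partial M$ has nondegenerate second fundamental form at $f(y)$ (so $W=T_{f(y)}\partial M$ and this horizontal subspace is all of $T_y\partial N$) this is exactly the assertion that $\partial f$ is a Riemannian submersion — indeed a local isometry for $g_1$ — at $y$.

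The step I expect to be the main obstacle is precisely this equality analysis when $\mathcal{S}_{g_0}$ degenerates, i.e.\ when $\partial M$ has flat or cylindrical directions at $f(y)$: there $\mathcal{S}_{g_0}\circ\drm(\partial f)$ only ``sees'' the component of $\drm(\partial f)_y$ transverse to $\Ker\mathcal{S}_{g_0}$, so recovering the full Riemannian-submersion statement requires carefully combining the trace-norm equality with the pointwise Lipschitz bound $\sigma_1(\drm(\partial f)_y)\le x^1$ and the orthogonality of $\Ker\mathcal{S}_{g_0}$ and $W$ (and, where needed, the mean-curvature hypothesis through Lemma \ref{lem:mean}) rather than just citing Lemma \ref{lem:norm}.
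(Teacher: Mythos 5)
Your argument for the inequality is correct and is essentially the paper's own proof: the paper likewise factors $\drm\hat\nu_{f,g_0}=\mathcal S_{g_0}\circ(\partial f)_*$, applies Lemma \ref{lem:norm}, uses convexity in the form $\|\mathcal S_{g_0}\|_1=\tr\mathcal S_{g_0}=H_{\partial M,g_0}$, and feeds in the $1$-Lipschitz hypothesis through the conformal rescaling; the only cosmetic difference is that the paper first writes $\|\drm\hat\nu_{f,g_0}\|_{1,g_0}=x^1\|\drm\hat\nu_{f,g_0}\|_{1,g_1}$ and applies Lemma \ref{lem:norm} entirely in $g_1$ (noting the trace norm of the endomorphism $\mathcal S_{g_0}$ is the same in $g_0$ and $g_1$), while you keep mixed metrics and absorb the factor into $\sigma_1(\drm(\partial f))\le x^1$ --- the same computation. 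Your ``if'' direction of the equality case is also fine (the paper does not spell it out). As for the ``only if'' direction, the obstacle you flag --- degeneracy of $\mathcal S_{g_0}$ --- is real, but it is not an idea the paper supplies and you missed: the paper's whole treatment of rigidity is the sentence ``the rigidity statement follows from Lemma \ref{lem:norm}'', and the rigidity clause of that lemma assumes $S$ is injective, so the paper is implicitly working at points where the Euclidean shape operator is nondegenerate; in that case your projection argument reproduces its conclusion in more detail. When $\mathcal S_{g_0}$ does degenerate at $f(y)$, the pointwise ``only if'' statement genuinely fails: at a flat point ($\mathcal S_{g_0}=0$) both sides vanish for every $1$-Lipschitz $\partial f$, and more generally a rank-deficient $\mathcal S_{g_0}$ together with a $\drm(\partial f)$ that is an $x^1$-scaled isometry onto the image of $\mathcal S_{g_0}$ but collapses a kernel direction gives equality without a submersion; the extra hypotheses you hope to bring in (the mean-curvature inequality via Lemma \ref{lem:mean}) constrain $N$, not $\drm(\partial f)$ at $y$, so they cannot repair this. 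In short, your proof covers exactly the case the paper's proof covers, and the unresolved degenerate case reflects a tacit strict-convexity (injective shape operator) assumption in the paper rather than a gap specific to your argument.
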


    \begin{proof}
        Since $\drm\hat\nu_{f,g_0}=\mathcal S_{g_0}\circ(\partial f)_*$, and since $\partial f$ is $1$-Lipschitz with respect to $g_1$, we have 
        $$\|\drm\hat\nu_{f,g_0}(y)\|_{1,g_0}=x^1\|\drm\hat\nu_{f,g_0}(y)\|_{1,g_1}\le x^1\|\mathcal S_{g_0}(f(y))\|_{1,g_1},$$
        by Lemma \ref{lem:norm}. On the right hand side, $\mathcal S_{g_0}(f(y))$ is viewed as an endomorphism on $(T_{f(y)}\partial M,g_1)$, and it is easy to see that the norm is the same if we compute on the space $(T_{f(y)}\partial M,g_0)$. Hence,
        $$\|\drm\hat\nu_{f,g_0}(y)\|_{1,g_0}\le x^1\|\mathcal S_{g_0}(f(y))\|_{1,g_0}=x^1H_{\partial M,g_0}(f(y)).$$
        The rigidity statement follows from Lemma \ref{lem:norm}.
    \end{proof}

    Combining the lemmas, we conclude that, under the assumptions of the main theorem \ref{thm:main}, 
    $$|\left<\mathcal Bs,s\right>|\le (x^1H_{\partial M,g_0}\circ\partial f)|s|^2.$$

    \section{The boundary value problem}

    Throughout this section, $g=g_0$. Let $\epsilon=\pm 1$ be fixed. Set $$V=\frac{\epsilon}{2}\frac{\partial}{\partial x^1}$$ which is viewed as a constant vector field, and define modified connection as in Proposition \ref{prp:Lich}. We assume that
    \begin{enumerate}
        \item The hypersurface $\partial M$ is convex in $(\mathbb{R}^n_+,g_0)$, i.e., $M$ is parabolically convex. 
        \item The map $\partial f$ is $1$-Lipschitz with respect to $g_1$.
        \item The map $\partial f$ does not increase the mean curvature, i.e., $H_{\partial N}\ge H_{\partial M,g_1}\circ \partial f$. 
        \item The map $f$ has nonzero degree. 
        \item The map $f$ does not increase the scalar curvature, i.e., $R_{N}\ge R_{M,g_1}\circ f=-n(n-1)$. 
    \end{enumerate}
    as in the main theorem \ref{thm:main}.

    \subsection{Harmonic spinors are parallel}

    \begin{prp}\label{prp:parallel}
        Suppose that there is a nontrivial solution to the boundary value problem
        $$\begin{cases}
            \widehat Ds=0 & \text{ on }N,\\
            \chi s=\epsilon s & \text{ on }\partial N.
        \end{cases}$$
        Then the solution is parallel, i.e., $\widehat\nabla s=0$. Moreover, $R_N=-n(n-1)$, $H_{\partial N}=H_{\partial M,g_1}\circ \partial f$, and $\partial f$ is an isometry (when restricted to each component of $\partial N$). 
    \end{prp}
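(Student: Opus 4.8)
The strategy is the standard one for such rigidity theorems: feed the solution $s$ into the Lichnerowicz identity of Corollary~\ref{cor:Lich} and integrate by parts, so that everything reduces to a single boundary integral which the hypotheses force to be nonnegative. Since $\widehat{D}s=0$ we have $\widehat{D}^2 s=0$, and Green's formula for the connection Laplacian $\widehat\nabla^{*}\widehat\nabla$ on $N$ (with $\nu_N$ the inner normal) gives
$$0=\int_N|\widehat\nabla s|^2+\frac14\int_N\bigl(R_N+n(n-1)\bigr)|s|^2+\int_{\partial N}\langle\widehat\nabla_{\nu_N}s,s\rangle.$$
The first term is a square and the second is nonnegative by the assumption $R_N\ge -n(n-1)$, so it suffices to show the boundary integral is nonnegative; once that is known, all three terms vanish. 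From $\int_N|\widehat\nabla s|^2=0$ we get $\widehat\nabla s\equiv 0$, hence $|s|$ is a positive constant and $s$ is nowhere zero, hence $R_N\equiv-n(n-1)$; and the vanishing of the boundary term will yield the last two conclusions.

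Next I would unwind the boundary term. Choosing a local orthonormal frame with $\bar e_n=\nu_N$ and using the definition of $D^{\partial N}$, one decomposes $\widehat D$ near $\partial N$ as $\widehat D=\bar c(\nu_N)\bigl(\widehat\nabla_{\nu_N}-D^{\partial N}-\tfrac12 H_{\partial N}\bigr)-(n-1)c(V)$ (the signs here being those of the odd case; the even case is parallel, using the graded Clifford relations of \S3). Hence $\widehat D s=0$ gives, on $\partial N$,
$$\widehat\nabla_{\nu_N}s=D^{\partial N}s+\tfrac12 H_{\partial N}s-(n-1)\bar c(\nu_N)c(V)s.$$
Pairing with $s$ and integrating over the closed manifold $\partial N$, three facts enter. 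First, $\mathcal B=\chi D^{\partial N}+D^{\partial N}\chi$ by definition, $\chi$ is a self-adjoint involution, and $D^{\partial N}$ is formally self-adjoint, so the boundary condition $\chi s=\epsilon s$ forces $\int_{\partial N}\langle D^{\partial N}s,s\rangle=\tfrac{\epsilon}{2}\int_{\partial N}\langle\mathcal B s,s\rangle$. Second, writing $\tfrac{\partial}{\partial x^1}=a\,\nu_{M,g_0}+w$ along $\partial M$, with $a=\drm x^1(\nu_{M,g_0})$ and $w\in T\partial M$, we have $\bar c(\nu_N)c(V)=\tfrac{\epsilon}{2}\bigl(a\chi+\bar c(\nu_N)c(w)\bigr)$; since $w\perp\nu_f$, the operator $\bar c(\nu_N)c(w)$ anticommutes with $\chi$, so $\langle\bar c(\nu_N)c(w)s,s\rangle=0$ on the $\epsilon$-eigenspace, and therefore $(n-1)\langle\bar c(\nu_N)c(V)s,s\rangle=\tfrac{(n-1)a}{2}|s|^2$. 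Third, Lemma~\ref{lem:mean} gives $(n-1)a\circ\partial f=\bigl(H_{\partial M,g_1}-x^1 H_{\partial M,g_0}\bigr)\circ\partial f$. Combining,
$$\int_{\partial N}\langle\widehat\nabla_{\nu_N}s,s\rangle=\int_{\partial N}\Bigl(\tfrac{\epsilon}{2}\langle\mathcal B s,s\rangle+\tfrac12\bigl(H_{\partial N}-H_{\partial M,g_1}\circ\partial f\bigr)|s|^2+\tfrac12\bigl(x^1 H_{\partial M,g_0}\circ\partial f\bigr)|s|^2\Bigr).$$

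Now the integrand on the right is pointwise nonnegative: the middle term is $\ge 0$ by the assumption $H_{\partial N}\ge H_{\partial M,g_1}\circ\partial f$, and the first and third together are at least $\tfrac12\bigl((x^1 H_{\partial M,g_0}\circ\partial f)|s|^2-|\langle\mathcal B s,s\rangle|\bigr)\ge 0$ by the estimate $|\langle\mathcal B s,s\rangle|\le (x^1 H_{\partial M,g_0}\circ\partial f)|s|^2$ that follows from Proposition~\ref{prp:B-estimate} and Lemma~\ref{lem:norm-estimate}. This closes the argument: the boundary integral is $\ge 0$, hence each of the three terms in the integrated Lichnerowicz identity vanishes. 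We read off $\widehat\nabla s=0$ (so $|s|>0$ is constant) and $R_N=-n(n-1)$; and since the nonnegative boundary integrand vanishes identically while $|s|>0$, we obtain $H_{\partial N}=H_{\partial M,g_1}\circ\partial f$ together with $|\langle\mathcal B s,s\rangle|=(x^1 H_{\partial M,g_0}\circ\partial f)|s|^2$ everywhere; by the rigidity clause of Lemma~\ref{lem:norm-estimate} the latter forces $\partial f$ to be a Riemannian submersion at every boundary point, hence — as $\dim\partial N=\dim\partial M$ — a local isometry, i.e.\ an isometry on each component of $\partial N$.

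The step I expect to be most delicate is the boundary bookkeeping in the second paragraph: pinning down the signs in the normal decomposition of $\widehat D$, and especially the observation that the component of $V=\tfrac{\epsilon}{2}\tfrac{\partial}{\partial x^1}$ tangent to $\partial M$ contributes nothing on the $\chi$-eigenspace, so that exactly the term $\tfrac{(n-1)a}{2}|s|^2$ survives and is matched, via Lemma~\ref{lem:mean}, against the mean-curvature hypothesis — leaving precisely $\tfrac12 x^1 H_{\partial M,g_0}\circ\partial f$ to absorb $\tfrac{\epsilon}{2}\langle\mathcal B s,s\rangle$ through the trace-norm estimate. One also uses throughout that $\chi s=\epsilon s$ is an elliptic local boundary condition for $\widehat D$, so that $s$ is smooth up to $\partial N$ and the integrations by parts are legitimate.
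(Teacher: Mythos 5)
Your proposal follows essentially the same route as the paper's own proof: integrate the Lichnerowicz formula of Corollary~\ref{cor:Lich}, rewrite the boundary term via the normal decomposition of $\widehat D$, convert $\left<D^{\partial N}s,s\right>$ into $\tfrac{\epsilon}{2}\left<\mathcal Bs,s\right>$ using the anticommutator with $\chi$, kill the tangential component of $V$ by the same anticommutation trick (the paper packages this as the operator $\mathcal C$), match the normal component $(n-1)\,\drm x^1(\nu_{M,g_0})$ against Lemma~\ref{lem:mean}, and close the inequality chain with $|\left<\mathcal Bs,s\right>|\le (x^1H_{\partial M,g_0}\circ\partial f)|s|^2$ from Proposition~\ref{prp:B-estimate} and Lemma~\ref{lem:norm-estimate}; your sign bookkeeping in the normal decomposition is correct. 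Two small repairs are needed. First, $\widehat\nabla$ is not a metric connection --- the zeroth-order term $c(X\otimes V)$ is self-adjoint, not skew-adjoint --- so $\widehat\nabla s=0$ does \emph{not} make $|s|$ constant (imaginary Killing spinors on hyperbolic space have non-constant norm); what you actually need, and what does follow, is that $s$ is nowhere zero, by uniqueness of solutions of the linear ODE $\widehat\nabla_{\dot\gamma}s=0$ along curves, which is also how the paper argues. Second, the final ``local isometry, i.e.\ an isometry on each component'' elides a step: a local isometry from a closed component of $\partial N$ onto $\partial M$ is a Riemannian covering, and injectivity requires knowing that $\partial M$, being a compact convex hypersurface in $\mathbb{R}^n$ with $n\ge 3$, is simply connected --- the paper makes this point explicitly and you should too. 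With these two fixes the argument is complete and coincides with the paper's.
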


    \begin{proof}
        By Corollary \ref{cor:Lich}, using integration by parts, we get 
        \begin{equation}\label{eq:parallel-a}
            \int_N |\widehat Ds|^2-|\widehat \nabla s|^2-\frac{1}{4}\left(R_N+n(n-1)\right)|s|^2 =\int_{\partial N}\left<\widehat\nabla_{\nu_N}s+\bar c(\nu_N)\widehat Ds,s\right>. \tag{a}
        \end{equation}
        Recall that, for a local orthonormal frame $\{\bar e_\alpha:1\le\alpha\le n-1\}$ of $T\partial N$, 
        $$D=\bar c(\nu_N)\nabla_{\nu_N}+\sum_{\alpha=1}^{n-1}\bar c(\bar e_\alpha)\nabla_{\bar e_\alpha}, \quad D^{\partial N}=\sum_{\alpha=1}^{n-1}\bar c(\nu_N)\bar c(\bar e_\alpha)\nabla_{\bar e_\alpha}-\frac{1}{2}H_{\partial N}.$$
        Hence, 
        $$\nabla_{\nu_N}+\bar c(\nu_N)D=D^{\partial N}+\frac{1}{2}H_{\partial N}.$$
        For the modified connection, c.f. Proposition \ref{prp:Lich} and its proof, we have 
        \begin{equation}\label{eq:parallel-b}
            \widehat\nabla_{\nu_N}+\bar c(\nu_N)\widehat D=D^{\partial N}+\frac{1}{2}H_{\partial N}-(n-1)c(\nu_N\otimes V). \tag{b}
        \end{equation}
        Thus, we need to compute $\left<D^{\partial N}s,s\right>$ and $\left<c(\nu_N\otimes V)s,s\right>$. 

        Recall that $\mathcal B=\chi D^{\partial N}+D^{\partial N}\chi$. Note that $\chi s=\epsilon s$ and that $\chi$ is self-adjoint. We have 
        \begin{align*}
            \left<D^{\partial N}s,s\right>&=\epsilon\left<D^{\partial N}\chi s,s\right>=\epsilon\left<\mathcal Bs,s\right>-\epsilon\left<\chi D^{\partial N}s,s\right>\\
            &=\epsilon\left<\mathcal Bs,s\right>-\epsilon\left<D^{\partial N}s,\chi s\right>=\epsilon\left<\mathcal Bs,s\right>-\left<D^{\partial N}s,s\right>.
        \end{align*}
        Hence, 
        \begin{equation}\label{eq:parallel-c}
            \left<D^{\partial N}s,s\right>=\frac{1}{2}\epsilon\left<\mathcal Bs,s\right>.\tag{c}
        \end{equation}
        Similarly, set $\mathcal C=\chi c(\nu_N\otimes V)+c(\nu_N\otimes V)\chi$. Then 
        $$\left<c(\nu_N\otimes V)s,s\right>=\frac{1}{2}\epsilon\left<\mathcal Cs,s\right>.$$
        On the other hand, since $\chi=c(\nu_N\otimes\nu_f)$, we can compute $\mathcal{C}$ directly: In the ungraded case, 
        $$\mathcal{C}=-c(\nu_f)c(V)-c(V)c(\nu_f)=2g_0(\nu_f,V)=\epsilon\,\drm x^1(\nu_M)\circ \partial f.$$
        In the graded case, 
        $$\mathcal{C}=c(\nu_f)c(V)+c(V)c(\nu_f)=2g_0(\nu_f,V)=\epsilon\,\drm x^1(\nu_M)\circ \partial f.$$
        Hence, 
        \begin{equation}\label{eq:parallel-d}
            \left<c(\nu_N\otimes V)s,s\right>=\frac{1}{2}(\drm x^1(\nu_M)\circ \partial f)|s|^2.\tag{d}
        \end{equation}

        Combining \eqref{eq:parallel-a} -- \eqref{eq:parallel-d}, we obtain an integrated Lichnerowicz formula
        $$\int_N |\widehat Ds|^2-|\widehat \nabla s|^2-\frac{1}{4}\left(R_N+n(n-1)\right)|s|^2 = \frac{1}{2}\int_{\partial N} \epsilon\left<\mathcal Bs,s\right>+(H_{\partial N}-(n-1)\drm x^1(\nu_M)\circ \partial f)|s|^2.$$

        Since $\hat{D}s=0$, by Lemma \ref{lem:mean}, Proposition \ref{prp:B-estimate} and Lemma \ref{lem:norm-estimate}, we have 
        \begin{align*}
            0&\ge \int_N |\widehat Ds|^2-|\widehat \nabla s|^2-\frac{1}{4}\left(R_N+n(n-1)\right)|s|^2\\
            &=\frac{1}{2}\int_{\partial N} \epsilon\left<\mathcal Bs,s\right>+(H_{\partial N}-(n-1)\drm x^1(\nu_M)\circ \partial f)|s|^2\\
            &\ge\frac{1}{2}\int_{\partial N} (-\|\drm\hat\nu_f\|_1+H_{\partial M,g_1}\circ\partial f- (n-1)\drm x^1(\nu_M)\circ \partial f)|s|^2\\
            &\ge\frac{1}{2}\int_{\partial N} (-x^1H_{\partial M,g_0}\circ \partial f+H_{\partial M,g_1}\circ\partial f- (n-1)\drm x^1(\nu_M)\circ \partial f)|s|^2=0.
        \end{align*}
        Therefore, every inequality above is an equality. In particular, $\widehat \nabla s=0$, which implies that $|s|\ne 0$ everywhere. Hence, $R_N=-n(n-1)$, $H_{\partial N}=H_{\partial M}\circ \partial f$, and $\|\drm\hat\nu_f\|_1= x^1H_{\partial M,g_0}\circ \partial f$. By Lemma \ref{lem:norm-estimate}, $\partial f$ is a local isometry. Since $\partial M$ is a convex hypersurface in $(\mathbb{R}^n_+,g_0)$, it is simply connected, and thus $\partial f$ is an isometry. 
    \end{proof}

    \subsection{Parallel spinors lead to Killing spinors}

    Let $\Delta_n$ be the space of complex spinors on $\mathbb{R}^n$, which is a complex vector space of dimension $K=2^{\lfloor\frac{n}{2}\rfloor}$. An element $\pi\in\Delta_n$ can be viewed as a constant spinor on $M$, and thus we can define
    $$\tilde\pi:C^\infty(N,S)\to C^\infty(N,\slashed{S}_N), \quad \alpha\otimes\beta\mapsto\alpha\left<\beta,\pi\right>.$$
    Note that we are using the convention that the inner product is linear in the first spot. We use the same definition for both cases. 
    
    Recall that $\varphi\in C^\infty(N,\slashed{S}_N)$ is called a \emph{Killing spinor} if there exists a constant $\lambda\in\mathbb{C}$ such that 
    \begin{equation}\label{eq:Killing}
        \nabla^{\slashed{S}_N}_X\varphi=\lambda c(X)\varphi, \quad \forall X\in C^\infty(N,TN).
    \end{equation}
    The number $\lambda$ is called the \emph{Killing constant} of $\varphi$. 

    \begin{prp}\label{prp:nonzero}
        Suppose that there is a nontrivial solution to the boundary value problem
        $$\begin{cases}
            \widehat\nabla s=0 & \text{ on }N,\\
            \chi s=\epsilon s & \text{ on }\partial N.
        \end{cases}$$
        Then for any nonzero $\pi\in\Delta_n$, $\tilde{\pi}(s)$ is a nonzero spinor on $N$.
    \end{prp}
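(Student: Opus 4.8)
The plan is to reduce the assertion to an irreducibility property of the complex spinor module $\Delta_n$ and then to extract that property from the interior equation $\widehat{\nabla}s=0$ together with the boundary condition $\chi s=\epsilon s$. Since $M\subseteq\mathbb{R}^n_+$ carries the flat metric $g_0$, the bundle $f^*\slashed{S}_M$ (with $g=g_0$) is canonically trivial, so fixing an orthonormal basis of $\Delta_n$ gives a global parallel frame $\pi_1,\dots,\pi_K$ and $s=\sum_j\alpha_j\otimes\pi_j$ with $\alpha_j\in C^\infty(N,\slashed{S}_N)$. For $y\in N$ let $W_y\subseteq\Delta_n$ be the span of the $\Delta_n$-components of $s(y)$, i.e.\ the image of the contraction $\ell\mapsto(\ell\otimes\mathrm{id})s(y)$, and put $W=\sum_{y\in N}W_y$. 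From the definition of $\widetilde{\pi}$ one checks that $\widetilde{\pi}(s)\equiv0$ exactly when $\pi\perp W$, so the Proposition is equivalent to $W=\Delta_n$. As $s$ is nontrivial, $W\neq0$; and $\Delta_n$ is irreducible as a module over $\Cl_n$ acting through $v\mapsto c(1\otimes v)$ (for $n$ even because $\Cl_n\cong\mathrm{End}(\Delta_n)$, for $n$ odd because $\Delta_n$ is one of its two irreducibles). Hence it suffices to prove that $W$ is invariant under $c(v)$ for every $v\in\mathbb{R}^n$.

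Set $e_1=\partial_{x^1}$, so $V=\tfrac{\epsilon}{2}e_1$. Because $f^*\slashed{S}_M$ is flat and $V$ is a constant section, $\slashed{S}_N\otimes W$ is a $\nabla$-parallel subbundle; rewriting $\widehat{\nabla}s=0$ as $\nabla_Xs=-c(X\otimes V)s$ and using that $c(X\otimes V)=B(X)\otimes c(V)$ with $B(X)$ an invertible operator on $\slashed{S}_N$ for $X\neq0$ (namely $B(X)=\bar c(X)$ in the ungraded case, and $\bar c(X)$ composed with the chirality operator in the graded case), we find that $(1\otimes c(V))s$ is a section of $\slashed{S}_N\otimes W$, whence $c(V)W_y\subseteq W$ for all $y$ and so $c(e_1)W\subseteq W$. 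Since $c(e_1)$ is diagonalizable, $W=W^+\oplus W^-$ with $W^\pm=W\cap\Delta_n^\pm$, where $\Delta_n^\pm$ are the two eigenspaces of $c(e_1)$ on $\Delta_n$. Moreover $c(V)$ is scalar on each $\Delta_n^\pm$, so $\widehat{\nabla}$ preserves $\slashed{S}_N\otimes\Delta_n^\pm$ and acts there as a connection on $\slashed{S}_N$ tensored with the identity; hence the components $s^\pm$ of $s$ in $\slashed{S}_N\otimes\Delta_n^\pm$ are again $\widehat{\nabla}$-parallel, $\widehat{\nabla}$-parallel transport preserves the $\Delta_n^\pm$-factors, and consequently the $\Delta_n^\pm$-span of $s^\pm(y)$ is a $y$-independent subspace, which one identifies with $W^\pm$.

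For the transverse directions I would use the boundary condition. Write $\nu_f=\langle\nu_f,e_1\rangle e_1+w$ with $w=w(y)\perp e_1$; then $c(\nu_f)=\langle\nu_f,e_1\rangle c(e_1)+c(w)$, and $c(w)$ anticommutes with $c(e_1)$, hence restricts to isomorphisms $\Delta_n^\pm\to\Delta_n^\mp$ wherever $w\neq0$. Writing $\chi=A\otimes c(\nu_f)$ with $A$ an invertible operator on $\slashed{S}_N$ with $A^2=\pm1$ (namely $A=\bar c(\nu_N)$, possibly composed with the chirality operator), splitting $\chi s=\epsilon s$ into its $\Delta_n^\pm$-parts and applying $A^{-1}$ to the $\Delta_n^+$-part gives, on $\partial N$, an identity of the shape $(1\otimes c(w))s^-=Ls^+$, where $L$ is an operator on $\slashed{S}_N$ that is invertible exactly where $\langle\nu_f,e_1\rangle\neq\pm1$, i.e.\ where $\nu_f\neq\pm e_1$. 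At such a point $y$, comparing $\Delta_n^+$-spans of the two sides --- the left side has $\Delta_n^+$-span $c(w(y))W^-$, the right side has $\Delta_n^+$-span $W^+$, since an invertible operator on $\slashed{S}_N$ tensored with the identity does not alter $\Delta_n$-spans --- yields $c(w(y))W^-=W^+$, and applying $c(w(y))$ once more, $c(w(y))W^+=W^-$; hence $c(w(y))W=W$ whenever $\nu_f(y)\neq\pm e_1$.

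Finally, $\partial f$ has degree $\deg f\neq0$ and is therefore surjective, while the Gauss map of the convex hypersurface $\partial M$ is onto $S^{n-1}$, so $\nu_f\colon\partial N\to S^{n-1}$ is surjective (alternatively, Proposition \ref{prp:parallel} shows $\partial f$ is an isometry onto each component of $\partial N$, hence surjective). Thus $\{\,w(y):\nu_f(y)\neq\pm e_1\,\}$ is a punctured closed disc spanning $e_1^\perp$, so $W$ is invariant under $c(v)$ for $v$ in a spanning set of $e_1^\perp$ and under $c(e_1)$, i.e.\ under all of $c(\mathbb{R}^n)$; being a nonzero $\Cl_n$-submodule of the irreducible $\Delta_n$, it equals $\Delta_n$, which finishes the proof. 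The steps I expect to be most delicate are the bookkeeping of the two sets of Clifford sign conventions in the boundary computation, verifying that $L$ is genuinely invertible away from $\nu_f=\pm e_1$, and observing that only surjectivity of $\nu_f$ (not injectivity of the Gauss map) is needed, so that no strict-convexity hypothesis is required; note also that $W^\pm$ are the spans of the families of imaginary Killing spinors produced from $s$, which is the mechanism behind the section title.
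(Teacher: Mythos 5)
Your argument is correct and is essentially the paper's proof in dual form: your span $W$ is the orthogonal complement of the paper's annihilator $\Theta=\{\pi:\tilde\pi(s)\equiv 0\}$, your parallel-transport step showing the pointwise spans $W^{\pm}$ are constant is the paper's $\Theta_2\subseteq\Theta_1$, your boundary computation combined with surjectivity of $\nu_f$ (convexity plus nonzero degree, exactly as the paper uses it) is the paper's proof that $\Theta$ is invariant under all $c(v)$, and both proofs conclude by irreducibility of $\Delta_n$. The only cosmetic deviations are that you obtain the $c(\partial/\partial x^1)$-invariance of $W$ from the interior equation $\widehat\nabla s=0$ rather than from a boundary point with $\nu_f=\partial/\partial x^1$, and that you work at boundary points where $\nu_f\ne\pm\partial/\partial x^1$ (verifying invertibility of your operator $L$ there) instead of at points where $\nu_f$ equals the target direction exactly.
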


    \begin{proof}
        Since $\frac{\partial}{\partial x^1}$ is a unit vector, we have an orthogonal decomposition 
        $$\Delta_n=\Delta^+\oplus\Delta^-,$$
        where 
        \begin{equation}\label{eq:nonzero-spinor}
            \Delta^{\pm}=\left\{\pi\in\Delta_m:c\left(\frac{\partial}{\partial x^1}\right)\pi=\pm\sqrt{-1}\pi\right\}.
        \end{equation}
        For $\pi\in\Delta_m$, we can decompose $\pi=\pi^++\pi^-$ uniquely, where $\pi^{\pm}\in\Delta^{\pm}$. 
        
        In the ungraded case, suppose that 
        \begin{equation}\label{eq:nonzero-a}
            s=\sum_{i=1}^{K/2}\alpha_i\otimes\gamma_i^++\beta_i\otimes\gamma_i^-,\quad \gamma_i^{\pm}\in\Delta^{\pm}\text{ are constant spinors}.\tag{a}
        \end{equation}
        For simplicity, we write $s=\alpha\otimes\gamma^++\beta\otimes\gamma^-$ as a shorthand of \eqref{eq:nonzero-a}. Then for any $X\in TN$, we have 
        $$\nabla_Xs+c(X\otimes V)s=0,$$
        which implies that 
        \begin{equation}\label{eq:nonzero-b}
            \nabla_X^{\slashed S_N}\alpha\otimes\gamma^++\nabla_X^{\slashed S_N}\beta\otimes\gamma^-=\frac{\epsilon}{2\sqrt{-1}}(c(X)\alpha\otimes \gamma^+-c(X)\beta\otimes \gamma^-).\tag{b}
        \end{equation}

        In the graded case, if $s=\alpha\otimes\gamma^++\beta\otimes\gamma^-$, then 
        \[c\left(X\otimes\frac{\partial}{\partial x^1}\right)s=(-1)^{\deg\alpha}c(X)\alpha\otimes \gamma^+-(-1)^{\deg\beta} c(X)\beta\otimes \gamma^-.\]
        Note that because we are using a different metric on $TM$, there is no $\sqrt{-1}$ in this formula. Now we have 
        \begin{equation}\label{eq:nonzero-b'}
            \nabla_X^{\slashed S_N}\alpha\otimes\gamma^++\nabla_X^{\slashed S_N}\beta\otimes\gamma^-=-\frac{\epsilon}{2}((-1)^{\deg\alpha}c(X)\alpha\otimes \gamma^+-(-1)^{\deg\beta}c(X)\beta\otimes \gamma^-).\tag{b'}
        \end{equation}
        For simplicity, we write
        $$\epsilon'=\begin{cases}
            \frac{\epsilon}{2\sqrt{-1}}, & \text{ungraded}; \\ 
            -\frac{\epsilon}{2}, & \text{graded}.
        \end{cases}$$
        
        Motivated by \cite{chai2023scalar}, we consider the following subspaces of $\Delta_m$: 
        \begin{align*}
            \Theta&=\{\pi\in\Delta_m:\tilde\pi(s)=0\},\\
            \Theta_1&=\{\pi\in\Delta_m:\widetilde{\pi^+}(s)=\widetilde{\pi^-}(s)=0\},\\
            \Theta_2&=\{\pi\in\Delta_m:\widetilde{\pi^+}(s)=\widetilde{\pi^-}(s)=0 \text{ at some }y\in N\}.
        \end{align*}
        We want to show that 
        \begin{clm}
            $\Theta=\Theta_1=\Theta_2$. 
        \end{clm}

        First, we show that $\Theta_2\subseteq\Theta_1$. Suppose $\pi=\pi^++\pi^-\in\Theta_2$. Applying $\widetilde{\pi^{\pm}}$ to \eqref{eq:nonzero-b} or \eqref{eq:nonzero-b'}, we have for the ungraded case 
        $$\nabla_X^{\slashed S_N}\alpha\left<\gamma^+,\pi^+\right>=\epsilon'c(X)\alpha\left<\gamma^+,\pi^+\right>,\quad \nabla_X^{\slashed S_N}\beta\left<\gamma^-,\pi^-\right>=-\epsilon'c(X)\beta\left<\gamma^-,\pi^-\right>;$$
        for the graded case
        $$\nabla_X^{\slashed S_N}\alpha\left<\gamma^+,\pi^+\right>=(-1)^{\deg\alpha}\epsilon'c(X)\alpha\left<\gamma^+,\pi^+\right>,\quad \nabla_X^{\slashed S_N}\beta\left<\gamma^-,\pi^-\right>=-(-1)^{\deg\beta}\epsilon'c(X)\beta\left<\gamma^-,\pi^-\right>.$$
        
        In the ungraded case, we have 
        \begin{equation}\label{eq:nonzero-c}
            \nabla_X^{\slashed S_N}\widetilde{\pi^+}(s)=\epsilon'c(X)\widetilde{\pi^+}(s), \quad \nabla_X^{\slashed S_N}\widetilde{\pi^-}(s)=-\epsilon'c(X)\widetilde{\pi^-}(s).\tag{c}
        \end{equation}
        Since $\pi\in\Theta_2$, we know that $\widetilde{\pi^{\pm}}(s)=0$ at some $y\in N$. For any $y'\in N$, take a path $\gamma:[0,1]\to N$ connecting $y$ and $y'$. Then \eqref{eq:nonzero-c} becomes an ODE of $\widetilde{\pi^{\pm}}(s)\circ\gamma$. By uniqueness of the solution, we know that $\widetilde{\pi^{\pm}}(s)(y')=0$. Hence, $\pi\in\Theta_1$. 

        In the graded case, we decompose
        $$\widetilde{\pi^+}(s)=\widetilde{\pi^+}(s)^0+\widetilde{\pi^+}(s)^1,$$ 
        according to parity, and then 
        \begin{equation}\label{eq:nonzero-c'}
            \nabla_X^{\slashed{S}_N}\widetilde{\pi^+}(s)^0=-\epsilon'c(X)\widetilde{\pi^+}(s)^1, \quad \nabla_X^{\slashed{S}_N}\widetilde{\pi^+}(s)^1=\epsilon'c(X)\widetilde{\pi^+}(s)^0.\tag{c'}
        \end{equation}
        Since $\pi\in\Theta_2$, we know that $\widetilde{\pi^{+}}(s)^0=\widetilde{\pi^{+}}(s)^1=0$ at some $y\in N$. For any $y'\in N$, take a path $\gamma:[0,1]\to N$ connecting $y$ and $y'$. Then we have a system of ODEs of $\widetilde{\pi^{+}}(s)\circ\gamma$. By uniqueness of the solution, we know that $\widetilde{\pi^+}(s)(y')=0$. Similarly, $\widetilde{\pi^-}(s)(y')=0$ can be derived from
        \begin{equation}\label{eq:nonzero-c''}
            \nabla_X^{\slashed{S}_N}\widetilde{\pi^-}(s)^0=\epsilon'c(X)\widetilde{\pi^-}(s)^1, \quad \nabla_X^{\slashed{S}_N}\widetilde{\pi^-}(s)^1=-\epsilon'c(X)\widetilde{\pi^-}(s)^0.\tag{c''}
        \end{equation}
        Hence, $\pi\in\Theta_1$. 

        Next, we show that $\Theta\subseteq\Theta_2$. Suppose $\pi=\pi^++\pi^-\in\Theta$. Since $M$ is compact and $\partial f$ is surjective, we can find $y\in\partial N$ such that $\nu_f(y)=\frac{\partial}{\partial x^1}$. Then at $y$, by \eqref{eq:nonzero-a}, we have 
        $$\chi s=\bar c(\nu_N)(\alpha\otimes c(\nu_f)\gamma^++\beta\otimes c(\nu_f)\gamma^-)=\sqrt{-1}\bar c(\nu_N)(\alpha\otimes\gamma^+-\beta\otimes\gamma^-)$$
        if ungraded; 
        $$\chi s=\bar c(\nu_N)((-1)^{\deg\alpha}\alpha\otimes\gamma^+-(-1)^{\deg\beta}\beta\otimes\gamma^-)$$
        if graded. Since $\pi\in\Theta$, and since $\chi s=\epsilon s$, we have, at the point $y$, 
        $$\alpha\left<\gamma^+,\pi^+\right>-\beta\left<\gamma^-,\pi^-\right>=0$$
        if ungraded; 
        $$(-1)^{\deg\alpha}\alpha\left<\gamma^+,\pi^+\right>-(-1)^{\deg\beta}\beta\left<\gamma^-,\pi^-\right>=0$$
        if graded. In the latter case, we write down the even part and the odd part separately, and it follows that $\alpha\left<\gamma^+,\pi^+\right>-\beta\left<\gamma^-,\pi^-\right>=0$ still holds. 

        On the other hand, $\tilde{\pi}(s)=0$ implies that 
        $$\alpha\left<\gamma^+,\pi^+\right>+\beta\left<\gamma^-,\pi^-\right>=0.$$
        Hence, if ungraded, 
        $$\alpha\left<\gamma^+,\pi^+\right>=\beta\left<\gamma^-,\pi^-\right>=0$$
        which is the same as $\widetilde{\pi^{\pm}}(s)=0$, i.e., $\pi\in\Theta_2$. 

        Since it is clear that $\Theta_1\subseteq\Theta$, we conclude that $\Theta=\Theta_1=\Theta_2$. Now we want to show that 

        \begin{clm}
            $\Theta$ is invariant under $c(v)$ for all $v\in\mathbb{R}^n$. 
        \end{clm}

        Let $\pi=\pi^++\pi^-\in\Theta$ and let $v\in\mathbb{R}^n$ be a unit vector. Pick $y\in\partial N$ such that $\nu_f(y)=v$. Decompose $s$ as in \eqref{eq:nonzero-a}. Since $\pi\in\Theta_1$ and $\chi s=\epsilon s$, we have
        \begin{align*}
            0&=\widetilde{\pi^{+}}(\chi s)=\bar c(\nu_N)\left(\alpha\left<c(v)\gamma^+,\pi^+\right>+\beta\left<c(v)\gamma^-,\pi^+\right>\right)\\
            &=-\bar c(\nu_N)\left(\alpha\left<\gamma^+,c(v)\pi^+\right>+\beta\left<\gamma^-,c(v)\pi^+\right>\right)
        \end{align*}
        if ungraded;
        \begin{align*}
            0&=\widetilde{\pi^{+}}(\chi s)=\bar c(\nu_N)\left((-1)^{\deg\alpha}\alpha\left<c(v)\gamma^+,\pi^+\right>+(-1)^{\deg\beta}\beta\left<c(v)\gamma^-,\pi^+\right>\right)\\
            &=\bar c(\nu_N)\left((-1)^{\deg\alpha}\alpha\left<\gamma^+,c(v)\pi^+\right>+(-1)^{\deg\beta}\beta\left<\gamma^-,c(v)\pi^+\right>\right)
        \end{align*}
        if graded. In both cases, we have 
        $$(-1)^{\deg\alpha}\alpha\left<\gamma^+,c(v)\pi^+\right>+(-1)^{\deg\beta}\beta\left<\gamma^-,c(v)\pi^+\right>=0.$$
        Note that for $v=\frac{\partial}{\partial x^1}$, $(c(v)\pi)^{\pm}=c(v)\pi^{\pm}$; for $v\perp\frac{\partial}{\partial x^1}$, $(c(v)\pi)^{\pm}=c(v)\pi^{\mp}$. Hence, in both cases, 
        $$\widetilde{(c(v)\pi)^{\pm}}(s)=0.$$
        Therefore, $c(v)\pi\in\Theta_2$. This shows that $\Theta$ is invariant under $c(v)$ for all $v\in\mathbb{R}^n$. 

        Finally, we prove the proposition by showing that 
        \begin{clm}
            $\Theta=0$. 
        \end{clm}
        
        We know that $\Theta$ is an invariant subspace of the spinor representation $c:\Cl(\mathbb{R}^m)\to\End(\Delta_n)$, which is irreducible. Thus, $\Theta=0$ or $\Cl(\mathbb{R}^n)$. Using \eqref{eq:nonzero-a}, if we choose $\{\beta_i^{\pm}:1\le i\le K/2\}$ to be linearly independent, then there is at least one $\beta_i^{\pm}$ is not in $\Theta$ since $s\ne 0$. Hence, $\Theta=0$.
    \end{proof}

    \begin{cor}\label{cor:Killing}
        Let $s$ be a nontrivial solution as in Proposition \ref{prp:nonzero}. In the ungraded case, if $\pi\in\Delta^{\pm}$ is nonzero, where $\Delta^{\pm}$ is defined in \eqref{eq:nonzero-spinor}, then $\tilde\pi(s)$ is a nonzero Killing spinor with Killing constant $\pm\frac{\epsilon}{2\sqrt{-1}}$. In the graded case, if $\pi\in\Delta^{\pm}$ is nonzero, then $\tilde\pi(s)^0+\sqrt{-1}\tilde\pi(s)^1$ is a nonzero Killing spinor with Killing constant $\pm\frac{\epsilon}{2\sqrt{-1}}$.
    \end{cor}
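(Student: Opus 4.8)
The plan is to read the corollary directly off the differential identities established inside the proof of Proposition~\ref{prp:nonzero}, together with its non-vanishing conclusion and the elementary fact that a Killing spinor which is not identically zero vanishes nowhere (being, along any path, a solution of a linear first-order ODE with unique solutions).

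\emph{Ungraded case.} If $\pi\in\Delta^+$ then $\pi^-=0$ and $\tilde\pi(s)=\widetilde{\pi^+}(s)$, so \eqref{eq:nonzero-c} is literally the Killing equation $\nabla^{\slashed{S}_N}_X\tilde\pi(s)=\epsilon'c(X)\tilde\pi(s)$ with $\epsilon'=\tfrac{\epsilon}{2\sqrt{-1}}$; for $\pi\in\Delta^-$ the same identity gives the constant $-\epsilon'=-\tfrac{\epsilon}{2\sqrt{-1}}$. That $\tilde\pi(s)\not\equiv0$ is exactly the content of Proposition~\ref{prp:nonzero}, and a not-identically-zero Killing spinor is nowhere zero.

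\emph{Graded case.} Take $\pi\in\Delta^+$, so $\tilde\pi(s)=\widetilde{\pi^+}(s)$, and split it into its even and odd parts $\tilde\pi(s)^0,\tilde\pi(s)^1$. Then \eqref{eq:nonzero-c'} says the pair $(\tilde\pi(s)^0,\tilde\pi(s)^1)$ solves the coupled system $\nabla^{\slashed{S}_N}_X\tilde\pi(s)^0=-\epsilon'c(X)\tilde\pi(s)^1$ and $\nabla^{\slashed{S}_N}_X\tilde\pi(s)^1=\epsilon'c(X)\tilde\pi(s)^0$ with $\epsilon'=-\tfrac{\epsilon}{2}$. Put $\varphi=\tilde\pi(s)^0+\sqrt{-1}\,\tilde\pi(s)^1$; using $\sqrt{-1}\cdot\sqrt{-1}=-1$ the two equations merge into $\nabla^{\slashed{S}_N}_X\varphi=\sqrt{-1}\,\epsilon'c(X)\varphi$, so $\varphi$ is a Killing spinor with constant $\sqrt{-1}\,\epsilon'=\tfrac{\epsilon}{2\sqrt{-1}}$; for $\pi\in\Delta^-$ one uses \eqref{eq:nonzero-c''} instead and gets the constant $-\tfrac{\epsilon}{2\sqrt{-1}}$. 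For non-triviality: since $\slashed{S}_N$ splits as the orthogonal direct sum of its even and odd subbundles, $\varphi(y)=0$ forces $\tilde\pi(s)^0(y)=\tilde\pi(s)^1(y)=0$, i.e. $\tilde\pi(s)(y)=0$; as $\tilde\pi(s)\not\equiv0$ by Proposition~\ref{prp:nonzero}, $\varphi\not\equiv0$, hence $\varphi$ is nowhere zero.

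There is essentially no obstacle left — the work was done in Proposition~\ref{prp:nonzero}. The only care needed is bookkeeping: tracking the powers of $\sqrt{-1}$ so that $\sqrt{-1}\,\epsilon'$ matches the asserted value $\pm\tfrac{\epsilon}{2\sqrt{-1}}$, and recognizing that the complex combination $\tilde\pi(s)^0+\sqrt{-1}\,\tilde\pi(s)^1$ is precisely the device that collapses the two-component system \eqref{eq:nonzero-c'} into a single Killing equation.
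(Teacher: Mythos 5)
Your proposal is correct and follows the same route as the paper: the paper's proof simply cites the identities \eqref{eq:nonzero-c}, \eqref{eq:nonzero-c'}, \eqref{eq:nonzero-c''} (which, as you note, hold for arbitrary $\pi$ since they come from applying $\widetilde{\pi^\pm}$ to the parallel-spinor equation), and you have just spelled out the bookkeeping — the reduction to $\widetilde{\pi^\pm}(s)$ when $\pi\in\Delta^\pm$, the merging of the graded two-component system via $\varphi=\tilde\pi(s)^0+\sqrt{-1}\,\tilde\pi(s)^1$, the verification $\sqrt{-1}\,\epsilon'=\frac{\epsilon}{2\sqrt{-1}}$, and the non-vanishing from Proposition \ref{prp:nonzero} — all of which checks out.
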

    \begin{proof}
        This follows from \eqref{eq:nonzero-c}, \eqref{eq:nonzero-c'} and \eqref{eq:nonzero-c''} in the proof above. 
    \end{proof}
    \begin{cor}\label{cor:independent}
        Let $s$ be a nontrivial solution as in Proposition \ref{prp:nonzero}. If $\pi_1,\ldots,\pi_k$ are linearly independent in $\Delta_m$, then $\tilde\pi_1(s),\ldots,\tilde\pi_k(s)$ is linearly independent everywhere on $N$. 
    \end{cor}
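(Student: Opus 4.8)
The plan is to prove the stronger statement that, for every $y\in N$, the conjugate‑linear evaluation map $\Phi_y\colon\Delta_m\to\slashed S_{N,y}$, $\pi\mapsto\tilde\pi(s)(y)$, is injective; since an injective conjugate‑linear map carries linearly independent sets to linearly independent sets, this immediately gives the corollary. Write $s$ as in \eqref{eq:nonzero-a}, $s=\sum_{i=1}^{K/2}\alpha_i\otimes\gamma_i^++\beta_i\otimes\gamma_i^-$ with $\{\gamma_i^+\}$, $\{\gamma_i^-\}$ orthonormal bases of $\Delta^+$, $\Delta^-$; then $\Phi_y$ is injective precisely when the $K$ vectors $\alpha_1(y),\dots,\alpha_{K/2}(y),\beta_1(y),\dots,\beta_{K/2}(y)$ form a basis of the $K$‑dimensional space $\slashed S_{N,y}$. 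First I would extract, by matching coefficients of $\gamma_i^\pm$ in \eqref{eq:nonzero-b} (resp.\ \eqref{eq:nonzero-b'}), that each $\alpha_i$ is a Killing spinor with constant $\epsilon'=\frac{\epsilon}{2\sqrt{-1}}$ and each $\beta_i$ a Killing spinor with constant $-\epsilon'$ (in the graded case one uses instead the twisted spinors $\alpha_i^0+\sqrt{-1}\alpha_i^1$ etc., which by Corollary \ref{cor:Killing} are Killing with the same constants; this is harmless, since $(a,b)\mapsto(a,\sqrt{-1}b)$ is a linear automorphism of $\slashed S_{N,y}$ and hence preserves linear independence). By Proposition \ref{prp:nonzero} the $\alpha_i$ are linearly independent as sections, so they span a $(K/2)$‑dimensional space $E^+$ of Killing spinors with constant $\epsilon'$, and likewise the $\beta_i$ span $E^-$ with constant $-\epsilon'$. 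A nonzero Killing spinor on the connected manifold $N$ is nowhere zero (the Killing equation along a path is a linear ODE with unique solutions), so the evaluation $E^\pm\to\slashed S_{N,y}$ is injective; hence $\dim E^+(y)=\dim E^-(y)=K/2$ for every $y$, and in particular $\alpha_1(y),\dots,\alpha_{K/2}(y)$ and $\beta_1(y),\dots,\beta_{K/2}(y)$ are each linearly independent.

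It then remains to show $E^+(y)\cap E^-(y)=0$, i.e.\ $E^+(y)\oplus E^-(y)=\slashed S_{N,y}$, for every $y$. The key observation is that for $\psi^+\in E^+$ and $\psi^-\in E^-$ the function $y\mapsto\langle\psi^+(y),\psi^-(y)\rangle$ is constant on $N$: differentiating and using the two Killing equations together with the skew‑adjointness of Clifford multiplication gives $X\langle\psi^+,\psi^-\rangle=(\epsilon'+\overline{\epsilon'})\langle\bar c(X)\psi^+,\psi^-\rangle$, which vanishes because $\epsilon'=\frac{\epsilon}{2\sqrt{-1}}$ is purely imaginary. Hence it suffices to verify $E^+(y_1)\perp E^-(y_1)$ at a single point. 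As in the proof of Proposition \ref{prp:nonzero} (using that $\partial f$ is surjective onto $\partial M$ and that the inner normal of the convex hypersurface $\partial M$ realizes every direction), choose $y_1\in\partial N$ with $\nu_f(y_1)=\partial/\partial x^1$; at $y_1$ one has $\chi=\bar c(\nu_N)c(\partial/\partial x^1)$, and expanding the boundary condition $\chi s=\epsilon s$ in the basis $\{\gamma_i^\pm\}$, using $c(\partial/\partial x^1)\gamma_i^\pm=\pm\sqrt{-1}\gamma_i^\pm$, yields $\bar c(\nu_N)\alpha_i(y_1)=-\epsilon\sqrt{-1}\,\alpha_i(y_1)$ and $\bar c(\nu_N)\beta_i(y_1)=\epsilon\sqrt{-1}\,\beta_i(y_1)$. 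Thus $E^+(y_1)$ and $E^-(y_1)$ lie in the two distinct eigenspaces of the skew‑adjoint operator $\bar c(\nu_N)$ on $\slashed S_{N,y_1}$, which are mutually orthogonal; hence $E^+(y_1)\perp E^-(y_1)$, and by the constancy of the pairing, $E^+(y)\perp E^-(y)$ for all $y$. Together with $\dim E^\pm(y)=K/2$ this forces $E^+(y)\oplus E^-(y)=\slashed S_{N,y}$, so $\{\alpha_i(y)\}\cup\{\beta_j(y)\}$ is a basis of $\slashed S_{N,y}$, $\Phi_y$ is injective, and the corollary follows. The graded case runs identically after substituting the twisted Killing spinors provided by Corollary \ref{cor:Killing} and tracking the sign changes coming from $c(v)^2=|v|^2$.

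The step I expect to be the main obstacle is propagating the orthogonality $E^+(y)\perp E^-(y)$ off the single boundary point $y_1$; this is exactly what the constancy of $\langle\psi^+(y),\psi^-(y)\rangle$ buys, and that constancy hinges on the Killing constants being purely imaginary — a feature of the hyperbolic (negative scalar curvature) setting rather than the original positive‑curvature picture. A secondary point demanding care is the sign bookkeeping in the graded case, where the $\sqrt{-1}$‑twist in Corollary \ref{cor:Killing} and the relation $c(v)^2=|v|^2$ alter several signs, and one should confirm that the boundary computation still places $E^+(y_1)$ and $E^-(y_1)$ in complementary eigenspaces of $\bar c(\nu_N)$.
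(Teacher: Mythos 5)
Your argument is correct, but it takes a genuinely different route from the paper. The paper's proof is a three-line reduction: since the pairing is linear in the first slot, $\sum_j c_j\tilde\pi_j(s)=\tilde\pi(s)$ for $\pi=\sum_j\bar c_j\pi_j$, so a linear relation at a point $y$ means $\tilde\pi(s)(y)=0$, and this is fed back into the $\Theta$-machinery of Proposition \ref{prp:nonzero} to conclude $\pi\in\Theta_2=0$. You instead prove outright that the evaluation map $\pi\mapsto\tilde\pi(s)(y)$ is injective at every $y$: expanding $s$ as in \eqref{eq:nonzero-a} in orthonormal constant bases of $\Delta^\pm$, the coefficients $\alpha_i$, $\beta_i$ (or their graded twists from Corollary \ref{cor:Killing}) are Killing spinors with the opposite purely imaginary constants $\pm\epsilon'$, hence nowhere vanishing and pointwise independent within each family; the Hermitian pairing between the two families is constant because $\epsilon'+\overline{\epsilon'}=0$; and at a boundary point with $\nu_f=\partial/\partial x^1$ the condition $\chi s=\epsilon s$ places the two families in the orthogonal $\mp\epsilon\sqrt{-1}$ eigenspaces of $\bar c(\nu_N)$, so the dimension count $K/2+K/2=K$ gives a pointwise basis of $\slashed S_{N,y}$. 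This is longer, but it buys something: it establishes directly that the pointwise kernel $\{\pi:\tilde\pi(s)(y)=0\}$ vanishes, whereas the paper's step ``$\tilde\pi(s)(y)=0\Rightarrow\pi\in\Theta_2$'' is terse, since membership in $\Theta_2$ requires $\widetilde{\pi^+}(s)$ and $\widetilde{\pi^-}(s)$ to vanish separately at $y$, not merely their sum; your orthogonality argument is exactly what justifies such a separation. The only part you left sketchy is the graded boundary computation, and it does go through: writing $\alpha_i=\alpha_i^0+\alpha_i^1$, the condition $\chi s=\epsilon s$ at the special point gives $\bar c(\nu_N)\alpha_i^0=\epsilon\alpha_i^1$, $\bar c(\nu_N)\alpha_i^1=-\epsilon\alpha_i^0$, whence $\bar c(\nu_N)(\alpha_i^0+\sqrt{-1}\alpha_i^1)=-\epsilon\sqrt{-1}(\alpha_i^0+\sqrt{-1}\alpha_i^1)$, and similarly the twisted $\beta_i$ land in the $+\epsilon\sqrt{-1}$ eigenspace, so the two eigenspaces are again complementary and the rest of your argument applies verbatim after conjugating by the automorphism $(a,b)\mapsto(a,\sqrt{-1}b)$.
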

    \begin{proof}
        Suppose that $c_1,\ldots,c_k\in\mathbb{C}$, and that $\sum_{j=1}^k c_j\tilde\pi_j(s)=0$ holds at some $y\in N$. Set
        $$\pi=\sum_{j=1}^k\bar c_j\pi_j.$$
        We have, by decomposing as in \eqref{eq:nonzero-a} in the proof above, 
        $$\tilde\pi(s)=\alpha\left<\gamma^+,\pi\right>+\beta\left<\gamma^-,\pi\right>=\sum_{j=1}^k c_j\left(\alpha\left<\gamma^+,\pi_j\right>+\beta\left<\gamma^-,\pi_j\right>\right)=\sum_{j=1}^kc_j\tilde\pi_j(s),$$
        which vanishes at $y$. Hence, $\pi\in\Theta_2=0$ by Proposition \ref{prp:nonzero}.
    \end{proof}

    \subsection{Existence of imaginary Killing spinors implies negative curvature}

    The following result is well known. We include a proof for completeness. 

    \begin{prp}\label{prp:negative}
        If there is a nontrivial Killing spinor on $N$ with Killing constant $\lambda=\pm\frac{\sqrt{-1}}{2}$, then $N$ has constant Ricci curvature $-(n-1)$. If there is a set of linearly independent Killing spinors $\{\varphi_i:1\le i\le 2^{\lfloor\frac{n}{2}\rfloor}\}$ on $N$ with Killing constant $\lambda_i=\pm\frac{\sqrt{-1}}{2}$, then $N$ has constant sectional curvature $-1$. 
    \end{prp}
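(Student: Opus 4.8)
The plan is to use the standard Bochner/Weitzenböck argument for Killing spinors, deriving an algebraic identity relating the curvature of $N$ to Clifford multiplication acting on the Killing spinor(s). First I would differentiate the Killing equation \eqref{eq:Killing}. Starting from $\nabla^{\slashed S_N}_X\varphi=\lambda c(X)\varphi$ and taking another covariant derivative in the direction $Y$, then antisymmetrizing in $X,Y$ and subtracting the corresponding $\nabla_{[X,Y]}$ term, one obtains an expression for the curvature endomorphism $R^{\slashed S_N}(X,Y)\varphi$ acting on $\varphi$. Using that $\nabla c=0$ and expanding $c(X)c(Y)$, the right-hand side collapses to $\lambda^2\bigl(c(X)c(Y)-c(Y)c(X)\bigr)\varphi$, which for $X\perp Y$ equals $2\lambda^2 c(X)c(Y)\varphi=2\lambda^2 c(X\wedge Y)\varphi$. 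On the other hand, the spinor curvature is $R^{\slashed S_N}(X,Y)=\tfrac14\sum_{\alpha,\beta}\langle R^{TN}(X,Y)\bar e_\alpha,\bar e_\beta\rangle c(\bar e_\alpha)c(\bar e_\beta)$. Contracting this identity — specifically, Clifford-multiplying by $c(Y)$ and summing $Y$ over an orthonormal frame — produces the classical relation
\[
\tfrac12\,c(\mathrm{Ric}(X))\varphi=\lambda^2(n-1)\,2\,c(X)\varphi,
\]
up to the constants I need to track carefully; with $\lambda=\pm\tfrac{\sqrt{-1}}{2}$ one has $\lambda^2=-\tfrac14$, giving $c(\mathrm{Ric}(X)+(n-1)X)\varphi=0$. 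Since $\varphi$ is nowhere vanishing (a nontrivial Killing spinor has constant norm because $\tfrac{d}{dt}|\varphi|^2$ along a curve involves $\mathrm{Re}\,\lambda\langle c(\dot\gamma)\varphi,\varphi\rangle=0$ for purely imaginary $\lambda$), and since $c(v)\varphi=0$ with $\varphi\neq0$ forces $v=0$ (as $c(v)^2=-|v|^2$), we conclude $\mathrm{Ric}(X)=-(n-1)X$ for all $X$, i.e.\ $N$ is Einstein with $\mathrm{Ric}=-(n-1)$.

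For the second statement, with a full set of $2^{\lfloor n/2\rfloor}$ linearly independent Killing spinors, I would go back to the pre-contraction identity
\[
R^{\slashed S_N}(X,Y)\varphi_i=2\lambda_i^2\,c(X\wedge Y)\varphi_i=-\tfrac12 c(X\wedge Y)\varphi_i
\]
(valid for each $i$, with possibly different signs of $\lambda_i$ but common $\lambda_i^2=-\tfrac14$). Rewriting $R^{\slashed S_N}(X,Y)=\tfrac14 c\bigl(R^{TN}(X,Y)\bigr)$ where $R^{TN}(X,Y)$ is the curvature 2-form, this says that the 2-form $\mathcal{R}:=R^{TN}(X,Y)+(X\wedge Y)$ satisfies $c(\mathcal R)\varphi_i=0$ for every $i$. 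Because $\dim\Delta_n=2^{\lfloor n/2\rfloor}$ equals the number of the $\varphi_i$ and they are pointwise linearly independent (Corollary \ref{cor:independent} supplies exactly this), the map $c(\mathcal R)\colon\slashed S_N\to\slashed S_N$ annihilates a basis of the fiber, hence $c(\mathcal R)=0$ as an endomorphism. But Clifford multiplication by $2$-forms is injective on spinors in dimension $n\geq 3$ (the only $2$-form killed by $c$ is zero — one checks this on the basis $c(\bar e_\alpha)c(\bar e_\beta)$, which are linearly independent in the Clifford algebra), so $\mathcal R=0$, i.e.\ $R^{TN}(X,Y)=-X\wedge Y$ for all $X,Y$. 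This is exactly the statement that $N$ has constant sectional curvature $-1$.

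The main obstacle I anticipate is bookkeeping rather than conceptual: getting all the numerical constants right in the Weitzenböck computation (factors of $\tfrac14$, $\tfrac12$, signs from $c(X)c(Y)=-c(Y)c(X)$ for orthogonal $X,Y$, and from $\lambda^2=-\tfrac14$), and stating cleanly the two standard facts about Clifford multiplication by $2$-forms that the argument uses — namely that $c$ is injective on $\Lambda^2$ for $n\geq 3$, and that $c(v)$ is injective on spinors for $v\neq0$. I would isolate the first as a short linear-algebra remark and the second is immediate from $c(v)^2=-|v|^2$. One should also note explicitly why a nontrivial imaginary Killing spinor has no zeros: this follows from the first-order ODE that $|\varphi|^2$ satisfies along any curve (the derivative vanishes for purely imaginary $\lambda$), or alternatively from the uniqueness-of-solutions argument already used in the proof of Proposition \ref{prp:nonzero}. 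With these facts in hand the proof is a direct contraction of the curvature identity, and the constant-sectional-curvature conclusion drops out from having a maximal family of Killing spinors via Corollary \ref{cor:independent}.
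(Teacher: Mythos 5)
Your proposal follows essentially the same route as the paper: differentiate the Killing equation to get $R^{\slashed S_N}(X,Y)\varphi=\lambda^2\bigl(c(Y)c(X)-c(X)c(Y)\bigr)\varphi$, contract once to obtain $c(\mathrm{Ric}(X)+(n-1)X)\varphi=0$ for the Einstein statement, and use a pointwise basis of Killing spinors to force the $2$-form $R^{TN}(X,Y)+X\wedge Y$ to act by zero, hence vanish, for the sectional-curvature statement; the constants you record are correct (the paper does the same contraction inside the Clifford algebra via the elements $\sigma_{\alpha,\beta}$ and the Bianchi identity). Two caveats. First, your primary justification for the non-vanishing of $\varphi$ fails: since $c(v)$ is skew-adjoint, $\left<c(\dot\gamma)\varphi,\varphi\right>$ is purely imaginary, so for purely imaginary $\lambda$ the quantity $\lambda\left<c(\dot\gamma)\varphi,\varphi\right>$ is real and the derivative of $|\varphi|^2$ need not vanish --- imaginary Killing spinors genuinely have non-constant norm (already on $\mathbb{H}^n$). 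The non-vanishing must instead come from the uniqueness-of-solutions argument you mention in passing: $\varphi$ is parallel for the connection $\nabla^{\slashed S_N}-\lambda c(\cdot)$, so a zero at one point forces $\varphi\equiv 0$. Second, for odd $n$ the spinor representation of $\Cl(\mathbb{R}^n)$ is not faithful, so ``the $c(\bar e_\alpha)c(\bar e_\beta)$ are linearly independent in the Clifford algebra'' is not by itself a proof that $c$ is injective on $2$-forms; you need that the kernel of the representation contains no nonzero even element, e.g.\ via the paper's parity argument with the complex volume element $\omega_{\mathbb{C}}$ (which is odd when $n$ is odd), or equivalently because the even subalgebra is simple and acts faithfully. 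With these two repairs your argument is the paper's argument in slightly different packaging.
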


    \begin{proof}
        Suppose that $\varphi\in C^\infty(N,\slashed{S}_N)$ satisfies \eqref{eq:Killing}. Let $\{\bar e_\alpha:1\le\alpha\le n\}$ be a local orthonormal frame of $TN$ defined on an open set $U\subseteq N$, with $\nabla^{TN} \bar e_\alpha=0$ at some $y\in U$. Then at $y$, we have $[\bar e_\alpha, \bar e_\beta]=0$ for any $\alpha,\beta$, and 
        \begin{align*}
            R^{\slashed S_N}(\bar e_\alpha,\bar e_\beta)\varphi&=\nabla^{\slashed{S}_N}_{\bar e_\alpha}\nabla^{\slashed{S}_N}_{\bar e_\beta}\varphi-\nabla^{\slashed{S}_N}_{\bar e_\beta}\nabla^{\slashed{S}_N}_{\bar e_\alpha}\varphi=\nabla^{\slashed{S}_N}_{\bar e_\alpha}(\lambda c(\bar e_\beta)\varphi)-\nabla^{\slashed{S}_N}_{\bar e_\beta}(\lambda c(\bar e_\beta)\varphi)\\
            &=\lambda c(\bar e_\beta)\nabla^{\slashed{S}_N}_{\bar e_\alpha}\varphi-\lambda c(\bar e_\beta)\nabla^{\slashed{S}_N}_{\bar e_\beta}\varphi=\lambda^2c(\bar e_\beta)c(\bar e_\alpha)\varphi-\lambda^2c(\bar e_\alpha)c(\bar e_\beta)\varphi\\
            &=\frac{1}{4}[c(\bar e_\alpha),c(\bar e_\beta)]\varphi.
        \end{align*}
        This holds for any orthonormal $\{\bar e_\alpha\}$. On the other hand, we know that 
        $$R^{\slashed S_N}(\bar e_\alpha,\bar e_\beta)\varphi=\frac{1}{2}\sum_{\gamma<\delta}\left<R^{TN}(\bar e_\alpha,\bar e_\beta)\bar e_\gamma,\bar e_\delta\right>c(\bar e_\gamma)c(\bar e_\delta)\varphi.$$
        Define
        \[\sigma_{\alpha,\beta}=[\bar e_\alpha,\bar e_\beta]-\sum_{\gamma,\delta}\left<R^{TN}(\bar e_\alpha,\bar e_\beta)\bar e_\gamma,\bar e_\delta\right>\bar e_\gamma\bar e_\delta\in C^\infty(U,\Cl_{N}).\]
        Here, $[\cdot,\cdot]$ is the commutator in the Clifford algebra. Then for any $\alpha,\beta$, we have $c(\sigma_{\alpha,\beta})\varphi=0$ on $U$. We compute the following element in $C^\infty(U,\Cl_{N})$:
        $$\sum_\beta\bar e_\beta\sigma_{\alpha,\beta}=2(n-1)\bar e_\alpha-\sum_{\beta,\gamma,\delta}\left<R^{TN}(\bar e_\alpha,\bar e_\beta)\bar e_\gamma,\bar e_\delta\right>\bar e_\beta\bar e_\gamma\bar e_\delta=(n-1)\bar e_\alpha+\frac{1}{2}\sum_{\beta,\gamma,\delta}R_{\alpha\beta\gamma\delta}\bar e_\beta\bar e_\gamma\bar e_\delta.$$
        In the last summation, the terms with distinct $\beta,\gamma,\delta$ add up to $0$ by Bianchi identity. Hence, 
        $$\sum_{\beta,\gamma,\delta}R_{\alpha\beta\gamma\delta}\bar e_\beta\bar e_\gamma\bar e_\delta=\sum_{\beta=\gamma\ne\delta}R_{\alpha\beta\gamma\delta}\bar e_\beta\bar e_\gamma\bar e_\delta+\sum_{\beta=\delta\ne\gamma}R_{\alpha\beta\gamma\delta}\bar e_\beta\bar e_\gamma\bar e_\delta=2\sum_\gamma R_{\alpha\gamma}\bar e_\gamma.$$
        Therefore, we have 
        $$X=\sum_\beta\bar e_\beta\sigma_{\alpha,\beta}=2(n-1)\bar e_\alpha+2\sum_\gamma R_{\alpha\gamma}\bar e_\gamma\in C^\infty(U,TN).$$
        Since $c(X)\phi=0$, we have 
        $$0=c(X)^2\phi=-|X|^2\phi$$
        on $U$. Since $\phi\ne 0$ on an open dense subset, we conclude that $X=0$ on $U$. Thus, 
        $$R_{\alpha\gamma}=-(n-1)\delta_{\alpha\gamma},$$
        i.e., $N$ has constant Ricci curvature $-1$. 

        Now suppose that $c(\sigma_{\alpha,\beta})\varphi_i=0$ for any $1\le i\le 2^{\lfloor\frac{n}{2}\rfloor}$. Then $\sigma_{\alpha,\beta}$ is in the kernel of the spinor representation. When $n$ is even, the kernel is trivial, and thus $\sigma_{\alpha,\beta}=0$. When $n$ is odd, the kernel of the spinor representation is the negative part, i.e., 
        $$\omega_{\mathbb{C}}\cdot\sigma_{\alpha,\beta}=-\sigma_{\alpha,\beta},$$
        where $\omega_{\mathbb{C}}$ is the complex volume element. Since $\omega_{\mathbb{C}}$ is odd while $\sigma_{\alpha,\beta}$ is even, we also have $\sigma_{\alpha,\beta}=0$ as before. Assume that $\alpha<\beta$ and $\gamma<\delta$. Then $\sigma_{\alpha,\beta}=0$ implies that 
        $$R_{\alpha\beta\gamma\delta}=\begin{cases}
            -1, & \text{if } \alpha=\gamma \text{ and } \beta=\delta \\ 
            0, & \text{otherwise}.
        \end{cases}$$
        Thus, $N$ has constant sectional curvature $-1$. 
    \end{proof}

    \subsection{Conclusion}

    Combining all results in this section, we conclude the following. 

    \begin{thm}\label{thm:rigidity}
        Suppose that there is a nontrivial solution to the boundary value problem
        $$\begin{cases}
            \widehat Ds=0 & \text{ on }N,\\
            \chi s=\epsilon s & \text{ on }\partial N.
        \end{cases}$$
        Then $N$ has constant sectional curvature $-1$, $H_{\partial N}=H_{\partial M,g_1}\circ \partial f$, and $\partial f$ is an isometry. 
    \end{thm}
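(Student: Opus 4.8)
The plan is simply to chain together the results established in this section. First I would invoke Proposition~\ref{prp:parallel}: a nontrivial solution $s$ of $\widehat D s = 0$ on $N$, $\chi s = \epsilon s$ on $\partial N$, is automatically parallel, $\widehat\nabla s = 0$, and along the way one already gets $R_N = -n(n-1)$, $H_{\partial N} = H_{\partial M,g_1}\circ\partial f$, and that $\partial f$ is an isometry on each component of $\partial N$. In particular $s$ is now a nontrivial solution of the \emph{parallel} boundary value problem appearing in Proposition~\ref{prp:nonzero}, so Proposition~\ref{prp:nonzero}, Corollary~\ref{cor:Killing} and Corollary~\ref{cor:independent} all apply to it. Two of the three conclusions of the theorem are thereby already in hand; it remains to upgrade the curvature statement from "constant Ricci curvature $-(n-1)$" (which a single imaginary Killing spinor would give via Proposition~\ref{prp:negative}) to "constant sectional curvature $-1$", for which Proposition~\ref{prp:negative} requires a full family of $2^{\lfloor n/2\rfloor}$ pointwise linearly independent imaginary Killing spinors.

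To build that family, fix a basis $\pi_1,\dots,\pi_K$ of $\Delta_n$, $K = 2^{\lfloor n/2\rfloor}$, chosen compatibly with the splitting $\Delta_n = \Delta^+\oplus\Delta^-$ of \eqref{eq:nonzero-spinor}, so that $K/2$ of the $\pi_j$ lie in $\Delta^+$ and $K/2$ in $\Delta^-$ (both summands having dimension $K/2$). By Corollary~\ref{cor:Killing}, each $\varphi_j := \tilde\pi_j(s)$ in the ungraded case, respectively $\varphi_j := \tilde\pi_j(s)^0 + \sqrt{-1}\,\tilde\pi_j(s)^1$ in the graded case, is a Killing spinor on $N$ with Killing constant $\pm\frac{\epsilon}{2\sqrt{-1}}$; since $\epsilon = \pm 1$ and $\frac{1}{\sqrt{-1}} = -\sqrt{-1}$, this constant equals $\pm\frac{\sqrt{-1}}{2}$, which is exactly the hypothesis of Proposition~\ref{prp:negative}. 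Each $\varphi_j$ is moreover nonvanishing everywhere: in the ungraded case this is Proposition~\ref{prp:nonzero} directly, and in the graded case $\varphi_j = 0$ at a point would force, by separating even and odd parts, $\tilde\pi_j(s)^0 = \tilde\pi_j(s)^1 = 0$ there, i.e.\ $\tilde\pi_j(s) = 0$, contradicting Proposition~\ref{prp:nonzero}.

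The one point needing a (very short) argument is pointwise linear independence of $\{\varphi_j\}_{j=1}^K$. In the ungraded case this is Corollary~\ref{cor:independent} verbatim. In the graded case, if $\sum_j c_j\varphi_j = 0$ at a point $y$, then separating the even part from the odd part gives $\sum_j c_j\tilde\pi_j(s)^0 = 0$ and $\sum_j c_j\tilde\pi_j(s)^1 = 0$ at $y$; adding these yields $\sum_j c_j\tilde\pi_j(s) = 0$ at $y$, and Corollary~\ref{cor:independent} forces $c_j = 0$ for all $j$. Feeding the family $\{\varphi_j : 1\le j\le K\}$ into Proposition~\ref{prp:negative} then shows $N$ has constant sectional curvature $-1$, which together with the mean curvature identity and the isometry statement from Proposition~\ref{prp:parallel} completes the proof. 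I do not expect a genuine obstacle here, since all of the substantive work lies in Propositions~\ref{prp:parallel}, \ref{prp:nonzero} and \ref{prp:negative}; the only bookkeeping to keep in view is the passage from $\widehat D s = 0$ to $\widehat\nabla s = 0$ (so that the hypotheses of the later results are met) and the elementary identification $\pm\frac{\epsilon}{2\sqrt{-1}} = \pm\frac{\sqrt{-1}}{2}$.
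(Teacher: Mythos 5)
Your proposal is correct and follows essentially the same route as the paper, which proves Theorem~\ref{thm:rigidity} precisely by chaining Proposition~\ref{prp:parallel}, Corollary~\ref{cor:Killing}, Corollary~\ref{cor:independent} and the second part of Proposition~\ref{prp:negative}. Your explicit handling of the graded-case bookkeeping (nonvanishing and pointwise independence of $\tilde\pi_j(s)^0+\sqrt{-1}\,\tilde\pi_j(s)^1$ via separation of even and odd parts) is a correct filling-in of details the paper leaves implicit.
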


    \begin{proof}
        It follows from Proposition \ref{prp:parallel}, Corollary \ref{cor:Killing}, Corollary \ref{cor:independent} and the second part of Proposition \ref{prp:negative}. 
    \end{proof}

    \section{The index theory}

    Throughout this section, $g=g_0$. Let $\epsilon=\pm 1$ be fixed. Set $$V=\frac{\epsilon}{2}\frac{\partial}{\partial x^1}$$ which is viewed as a constant vector field, and define modified connection as in Proposition \ref{prp:Lich}. Assume that 
    \begin{enumerate}
        \item The hypersurface $\partial M$ is convex in $(\mathbb{R}^n_+,g_0)$, i.e., $M$ is parabolically convex. 
        \setcounter{enumi}{3}
        \item The map $f$ has nonzero degree. 
    \end{enumerate}
    as in the main theorem \ref{thm:main}. Note that in the calculation we do not impose any curvature conditions. We first look at the ungraded case, and assume that $n$ is odd. 

    \begin{prp}\label{prp:elliptic}
        Assume that $n$ is odd. The boundary problem
        $$\begin{cases}
            \widehat Ds=0 & \text{ on }N,\\
            \chi s=\epsilon s & \text{ on }\partial N.
        \end{cases}$$
        is elliptic, and thus has a well defined index $\ind_\epsilon$. Moreover, $\ind_{-1}=-\ind_1$. 
    \end{prp}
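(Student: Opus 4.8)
The plan is to verify the Shapiro--Lopatinski condition for the pair $(\widehat D,\chi)$, and then to deduce $\ind_{-1}=-\ind_1$ by identifying the functional-analytic adjoint of the realization of $\widehat D$.

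For ellipticity I first note that $\widehat D$ differs from the twisted Dirac operator $D$ only by the zeroth order term $-c(n\otimes V)$, so the two have the same principal symbol and it suffices to test the boundary condition against $D$. Near $\partial N$ one has $\widehat D=\bar c(\nu_N)\nabla_{\nu_N}+\sum_{\alpha=1}^{n-1}\bar c(\bar e_\alpha)\nabla_{\bar e_\alpha}$ modulo order zero, so in a nonzero conormal direction $\xi\in T^*_y\partial N$ the model equation along the inward normal reads $\partial_t u=A(\xi)u$ with $A(\xi)=\sqrt{-1}\,\bar c(\nu_N)\bar c(\xi^\sharp)$. Here $A(\xi)$ is self-adjoint with $A(\xi)^2=|\xi|^2$, so its eigenvalues are $\pm|\xi|$ and the boundary values of the $L^2$-decaying solutions form the negative eigenspace $E_-(A(\xi))$. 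Next I record the algebraic properties of $\chi=c(\nu_N\otimes\nu_f)=\bar c(\nu_N)c(\nu_f)$: in the ungraded case $\bar c$ and $c$ commute and $\bar c(\nu_N)^2=c(\nu_f)^2=-1$, so $\chi$ is a self-adjoint involution, it commutes with $\bar c(\nu_N)$, and a one-line Clifford computation (using $\xi^\sharp\perp\nu_N$) gives $\chi A(\xi)=-A(\xi)\chi$. Thus $\chi$ interchanges the $\pm|\xi|$-eigenspaces of $A(\xi)$, and since $\chi$ anticommutes with the invertible operator $A(\xi)$ its own eigenspaces satisfy $\dim E_{\pm}(\chi)=\tfrac12\dim S_y$. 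If $0\neq v\in E_-(A(\xi))\cap E_\epsilon(\chi)$ then $\chi v=\epsilon v$ would lie in both $E_-(A(\xi))$ and $E_+(A(\xi))$, which is impossible; hence $E_-(A(\xi))\cap E_\epsilon(\chi)=\{0\}$, and comparing dimensions, $S_y=E_-(A(\xi))\oplus E_\epsilon(\chi)$. This is exactly the Shapiro--Lopatinski condition, so the realization of $\widehat D$ on $\{s\in H^1(N,S):\chi s=\epsilon s\text{ on }\partial N\}$ is Fredholm and $\ind_\epsilon$ is well defined.

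For the last assertion I write $\widehat D=\widehat D_\epsilon$ to display its dependence on $V=\tfrac{\epsilon}{2}\tfrac{\partial}{\partial x^1}$. Since $D$ is formally self-adjoint and $c(v)^{*}=-c(v)$ for the ungraded product, the identity $\widehat D_\epsilon=D-c(n\otimes V)$ gives $(\widehat D_\epsilon)^{*}=D+c(n\otimes V)=\widehat D_{-\epsilon}$ as formal adjoints, the zeroth order term contributing no boundary term in Green's formula. The boundary term in Green's formula for $D$ is $\int_{\partial N}\langle\bar c(\nu_N)s,s'\rangle$, and since $\bar c(\nu_N)$ commutes with $\chi$ it maps the $\epsilon$-eigenspace of $\chi$ isomorphically onto itself; hence $s'$ lies in the domain of the Hilbert-space adjoint of the operator computing $\ind_\epsilon$ precisely when $\chi s'=-\epsilon s'$ on $\partial N$. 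Therefore the adjoint of the Fredholm operator defining $\ind_\epsilon$ is exactly the Fredholm operator defining $\ind_{-\epsilon}$, and since an operator and its adjoint have opposite indices, $\ind_{-\epsilon}=-\ind_\epsilon$; taking $\epsilon=1$ gives $\ind_{-1}=-\ind_1$.

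The only delicate step I anticipate is the normal-form analysis underlying the ellipticity claim: extracting the tangential symbol $A(\xi)$, identifying the decaying solutions with $E_-(A(\xi))$ with the correct sign, and checking $[\bar c(\nu_N),\chi]=0$ and $\{A(\xi),\chi\}=0$. Once these are in place, both conclusions are formal, and the argument runs parallel to the treatment of such chirality boundary conditions in \cite{lott2021index,brendle2024scalar}.
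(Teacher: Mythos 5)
Your proposal is correct. For the ellipticity claim you verify the Shapiro--Lopatinski condition by hand: you check exactly the algebraic relations the paper checks ($\chi$ is a self-adjoint unitary involution commuting with $\bar c(\nu_N)$ and anticommuting with tangential Clifford multiplication, hence with the tangential symbol $A(\xi)$), and then run the standard eigenspace/dimension count $E_-(A(\xi))\oplus E_\epsilon(\chi)=S_y$; the paper instead stops at those relations and cites B\"ar--Ballmann's treatment of boundary chirality operators, so this part is the same argument, just made self-contained. For $\ind_{-1}=-\ind_1$ your route differs slightly: the paper first deforms $D_t=D-tnc(V)$ and uses homotopy invariance to reduce to the formally self-adjoint operator $D$, whose $\epsilon=\pm1$ boundary conditions are adjoint to each other, whereas you observe directly that $(\widehat D_\epsilon)^{\ast}=\widehat D_{-\epsilon}$ (since $c(v)^{\ast}=-c(v)$ in the ungraded case) and that the Green-form pairing matches the $\chi=\epsilon$ condition with $\chi=-\epsilon$, so the two realizations are Hilbert-space adjoints and their indices are opposite. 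Your version avoids the homotopy step, at the cost of invoking the (standard) fact that for an elliptic local boundary condition the adjoint realization is the formal adjoint with the adjoint boundary condition; the paper's version reuses the homotopy it needs later anyway and keeps the adjointness statement at the level of the untwisted $D$. Both are valid, and your reading that $\ind_\epsilon$ couples the operator $\widehat D_\epsilon$ (through $V=\tfrac{\epsilon}{2}\partial_{x^1}$) with the boundary condition $\chi s=\epsilon s$ is the intended one.
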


    \begin{proof}
        It is easy to check that $\chi=\bar c(\nu_N)c(\nu_f)$ is a unitary involution, with 
        $$\chi \bar c(\nu_N)=\bar c(\nu_N)\chi,$$
        $$\chi \bar c(\bar e_\alpha)=-\bar c(\bar e_\alpha)\chi, \quad 1\le\alpha\le n-1,$$
        where $\{\bar e_\alpha:1\le\alpha\le n-1\}$ is a local orthonormal frame of $T\partial N$. Then by \cite[Example 7.26]{bar2012boundary}, $\chi$ is a boundary chirality operator. Thus, the local boundary condition $\chi s=\epsilon s$ is elliptic, and thus has a well defined index. 
        
        Consider a family of elliptic boundary value problems defined by 
        $$D_t=D-tnc(V), \quad 0\le t\le 1.$$
        Then $D_0=D$ and $D_1=\widehat{D}$. By homotopy invariance we know that $\ind_\epsilon D=\ind_\epsilon \widehat D$. Now $D$ is self-adjoint and the boundary conditions for $\epsilon=\pm 1$ are adjoint to each other. We conclude that $\ind_{-1}=-\ind_1$. 
    \end{proof}

    \begin{prp}\label{prp:index}
        Using the notations from Proposition \ref{prp:elliptic}, if $n$ is odd, then either $\ind_1$ or $\ind_{-1}$ is positive. 
    \end{prp}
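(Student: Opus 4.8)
The plan is to compute $\ind_\epsilon$ outright and show it equals a fixed nonzero multiple of $\epsilon\,\deg(f)$. Since $\ind_{-1}=-\ind_1$ by Proposition~\ref{prp:elliptic}, the nonvanishing of this common quantity forces exactly one of $\ind_1,\ind_{-1}$ to be positive. First I would dispose of the modification: using the homotopy $D_t=D-tn\,c(V)$, $0\le t\le 1$, from the proof of Proposition~\ref{prp:elliptic}, we have $\ind_\epsilon\widehat D=\ind_\epsilon D$, so it suffices to treat the unmodified twisted Dirac operator $D$ on $S=\slashed{S}_N\otimes f^*\slashed{S}_{M,g_0}$ with the chirality condition $\chi s=\epsilon s$, $\chi=\bar c(\nu_N)c(\nu_f)$. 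The structural observation is that $(\mathbb{R}^n,g_0)$ is flat and parallelizable, so $\slashed{S}_{M,g_0}$ is the trivial bundle $M\times\Delta_n$ with flat connection; hence $f^*\slashed{S}_{M,g_0}=N\times\Delta_n$ is flat and its Chern character is the constant $\dim\Delta_n$ concentrated in degree $0$.

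Next I would invoke the index formula for Dirac-type operators with a local (chirality) boundary condition: it writes $\ind_\epsilon D$ as the integral over $N$ of the degree-$n$ part of $\widehat{A}(TN)\operatorname{ch}(f^*\slashed{S}_{M,g_0})$, plus a term supported on $\partial N$ built from the boundary geometry and from $\chi$. Because $n$ is odd, $\widehat{A}(TN)$ has no component in degree $n$, so the interior integral vanishes and $\ind_\epsilon D$ equals the boundary term alone. That term is additive over the components of $\partial N$ and invariant under deformations of the data $(\partial N,\nu_f\colon\partial N\to S^{n-1})$; indeed it depends only on the bordism class of this pair in $\widetilde\Omega^{\mathrm{Spin}}_{n-1}(S^{n-1})\cong\mathbb{Z}$, which is detected by the total degree $\sum_i\deg(\nu_f|_{\partial_i N})$. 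To pin down the universal constant I would compute the model $N=M=B^n$, $f=\mathrm{id}$: there $D$ is the Euclidean Dirac operator, every kernel element is parallel and hence constant, and imposing $\chi s=\epsilon s$ pointwise for all unit normals cuts $\slashed{S}_{B^n}\otimes\Delta_n\cong\Cl(\mathbb{R}^n)$ down to $\{x:vxv=\epsilon x\text{ for all }|v|=1\}$, which is $\{0\}$ for one sign and one-dimensional (spanned by $1\in\Cl(\mathbb{R}^n)$) for the other; together with self-adjointness of $D$ this gives $\ind_\epsilon=\pm\epsilon$ in the model, where $|\deg\nu_f|=1$. Alternatively one avoids naming the constant by a relative-index comparison of $(N,f)$ with a degree-$\deg(f)$ model assembled from copies of $(B^n,\mathrm{id})$: the gluing yields a closed odd-dimensional manifold carrying a flat twist, whose index vanishes, so only the model index survives.

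Finally, $\partial M$ is a convex hypersurface in $(\mathbb{R}^n,g_0)$, so its inner Gauss map has degree $\pm1$ (it is homotopic through convex hypersurfaces to that of a round sphere), while $\partial f_*[\partial N]=\partial f_*\,\partial[N,\partial N]=\deg(f)\,[\partial M]$ gives $\sum_i\deg(\partial f|_{\partial_i N})=\deg(f)$; hence $\sum_i\deg(\nu_f|_{\partial_i N})=\pm\deg(f)\ne 0$. Therefore $\ind_\epsilon D$ is a nonzero fixed multiple of $\epsilon\,\deg(f)$; in particular $\ind_1=-\ind_{-1}\ne0$, so exactly one of $\ind_1,\ind_{-1}$ equals $|\deg f|>0$. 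I expect the main obstacle to be the middle step: carefully setting up the boundary-term index formula for this chirality boundary value problem and rigorously identifying the boundary term with a multiple of $\deg(\nu_f)$ — equivalently, organizing a clean relative-index comparison with the ball model so that no characteristic density ever needs to be integrated over the interior.
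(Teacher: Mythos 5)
Your route is genuinely different from the paper's, but its central step is a gap rather than a proof. Everything hinges on the claim that $\ind_\epsilon D$ equals an interior $\widehat{A}$-type integral (which vanishes for $n$ odd) plus a boundary correction that is additive and depends only on the class of $(\partial N,\nu_f)$ in $\widetilde{\Omega}^{\mathrm{Spin}}_{n-1}(S^{n-1})$, hence only on $\deg\nu_f$. No such formula is cited or proved, and it is exactly where all the work lies: homotopy invariance of the index only covers deformations of the operator and of the elliptic boundary condition on a \emph{fixed} $N$, whereas bordism invariance of the boundary term requires a gluing/excision argument across a bordism of the boundary data, which you do not supply. Your fallback ``relative-index comparison'' does not repair this as stated: you propose to glue $(N,f)$ to copies of $(B^n,\mathrm{id})$ along their boundaries, but $\partial N$ need not be diffeomorphic to a disjoint union of spheres --- it is only required to map onto $\partial M\cong S^{n-1}$ with total degree $\deg f$ --- so there is no boundary identification to glue along; one would need an interpolating spin bordism together with an extension of all the data and an index theorem on the bordism piece, i.e.\ precisely the missing ingredient. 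The model computation on the ball also has unproven steps: that every solution of $Ds=0$, $\chi s=\epsilon s$ is parallel does not follow from flatness alone but needs the integrated Lichnerowicz formula with the boundary estimates (convexity of $S^{n-1}$), and the identification $\slashed{S}_{B^n}\otimes\Delta_n\cong\Cl(\mathbb{R}^n)$ is dimensionally off for $n$ odd ($2^{n-1}$ versus $2^n$); the count ``one line for one sign, zero for the other'' may survive with $\End(\Delta_n)$ in place of $\Cl(\mathbb{R}^n)$, but as written it is not established.

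For comparison, the paper never computes $\ind_\epsilon$ and needs no boundary-term index formula. It deforms the metrics to a product collar (leaving $\ind_\epsilon$ unchanged), so that $\mathcal B=0$ and $\chi$ anticommutes with $D^{\partial N}$; then \cite[Theorem B.1]{bar2024scalar} shows that $\ind_1=\ind_{-1}=0$ would force the index of $D^{\partial N}:C^\infty(\partial N,S^-)\to C^\infty(\partial N,S^+)$ to vanish, while the Atiyah--Singer theorem on the closed manifold $\partial N$, using the odd-dimensional identification $\slashed S_N|_{\partial N}\cong\slashed S_{\partial N}$, evaluates that index as $\deg f\cdot\chi(\partial M)\ne 0$ (with $\partial M$ an even-dimensional sphere); combined with $\ind_{-1}=-\ind_1$ this gives the statement. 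If you want to pursue your direct computation, the honest way to make it rigorous is essentially to prove the gluing/excision statement you are implicitly assuming, or to adopt the paper's reduction to the closed boundary where the classical index theorem applies.
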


    \begin{proof}
        Deform the metrics $\bar g$ and $g_0$ so that 
        \begin{itemize}
            \item On the boundaries, $\bar g|_{\partial N}$ and $g_0|_{\partial M}$ are unchanged. 
            \item The normal vectors $\nu_N$ and $\nu_{M,g_0}$ are unchanged. 
            \item There is a collar neighborhood $U$ of $\partial N$, and a collar neighborhood $V$ of $\partial M$ such that 
            $$U\cong \partial N\times [0,\delta), \quad V\cong \partial M\times [0,\delta)$$
            isometrically. 
        \end{itemize}
        Then $\ind_\epsilon$ is unchanged. Now we have $\nabla^{TM}\nu_M=0$, and thus, by Lemma \ref{lem:B}, $\mathcal{B}=0$. Write 
        $$S|_{\partial N}=S^+\oplus S^-,$$
        where $S^{\pm}$ are the eigen-subbundles of $\chi$ corresponding to eigenvalues $\pm 1$. Note that 
        $$\chi s=\pm s\implies \chi D^{\partial N}s=-D^{\partial N}\chi s=\mp D^{\partial N}s.$$
        We see that 
        $$D^{\partial N}:C^\infty(\partial N,S^{\pm})\to C^\infty(\partial N,S^{\mp}).$$
        By \cite[Theorem B.1]{bar2024scalar}, if $\ind_+=\ind_-=0$, then 
        $$\ind(D^{\partial N}:C^\infty(\partial N,S^{\pm})\to C^\infty(\partial N,S^{\mp}))=0.$$

        Now we make use of the assumption that $n$ is odd. Recall that if $n$ is odd, then the spinor representation
        $$c:\Cl(\mathbb{R}^n)\to\End(\Delta_n)$$
        is the irreducible representation of $\Cl(\mathbb{R}^n)$ such that $c(\omega_\mathbb{C})=1$. There is an embedding
        $$\iota:\Cl(\mathbb{R}^{n-1})\to\Cl(\mathbb{R}^n), \quad e_i\mapsto e_ie_n.$$
        Then $c\circ\iota$ is the spinor representation of $\Cl(\mathbb{R}^{n-1})$. Therefore, at any $y\in\partial N$, we have
        $$\slashed S_{N,y}=\slashed S_{\partial N,y},$$
        $$\iota:\Cl_{\partial N,y}\to\Cl_{N,y}, \quad \bar e_\alpha\mapsto \bar e_\alpha\nu_N.$$
        Under this identification, we have 
        \begin{itemize}
            \item $D^{\partial N}=-D_{\partial N}-\frac{1}{2}H_{\partial N}$, 
            \item $\chi=-\chi_{\partial N}$,
        \end{itemize}
        where $D_{\partial N}$ is the (twisted) Dirac operator on $S_{\partial N}=\slashed S_{\partial N}\otimes(\partial f)^*\slashed S_{\partial M}$, and 
        $$\chi_{\partial N}=c(\omega^{\partial N}_{\mathbb{C}})\otimes c(\omega^{\partial M}_{\mathbb{C}}).$$

        Define 
        $$S_{\partial N}^+=\slashed S_{\partial N}^+\otimes(\partial f)^*\slashed S_{\partial M}^++\slashed S_{\partial N}^-\otimes(\partial f)^*\slashed S_{\partial M}^-,$$
        $$S_{\partial N}^-=\slashed S_{\partial N}^+\otimes(\partial f)^*\slashed S_{\partial M}^-+\slashed S_{\partial N}^-\otimes(\partial f)^*\slashed S_{\partial M}^+.$$
        Then $S^{\pm}_{\partial N}=S^{\mp}$. Applying the Atiyah-Singer index theorem to each component of the closed manifold $\partial N$, we have 
        $$\ind(D_{\partial N}:C^\infty(\partial N,S^+_{\partial N})\to C^\infty(\partial N,S^-_{\partial N}))=\deg f\sum_{\pi_0(\partial N)}\chi(\partial M)\ne 0,$$
        where $\partial M$ is diffeomorphic to an even dimensional sphere. Hence, 
        $$\ind(D^{\partial N}:C^\infty(\partial N,S^{-})\to C^\infty(\partial N,S^{+}))\ne 0,$$
        which is a contradiction. Thus, $\ind_1$ and $\ind_{-1}$ cannot be both zero. By Proposition \ref{prp:elliptic}, one of them is positive. 
    \end{proof}

    Now we can finish the proof of the main theorem \ref{thm:main} for the odd dimensional case. 

    \begin{proof}[Proof of Theorem \ref{thm:main} (odd dimension).]
        By Proposition \ref{prp:index}, we can find $\epsilon=1$ or $-1$ such that
        $$\begin{cases}
            \widehat Ds=0 & \text{ on }N,\\
            \chi s=\epsilon s & \text{ on }\partial N.
        \end{cases}$$
        has a nontrivial solution. Then we apply Theorem \ref{thm:rigidity} to obtain the rigidity. 
    \end{proof}

    For the even dimensional case, we use the graded version. As in Proposition \ref{prp:elliptic}, we have 

    \begin{prp}
        The boundary problem
        $$\begin{cases}
            \widehat Ds=0 & \text{ on }N,\\
            \chi s=\epsilon s & \text{ on }\partial N.
        \end{cases}$$
        is elliptic. 
    \end{prp}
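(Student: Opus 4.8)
The plan is to run through the argument of Proposition~\ref{prp:elliptic} again, the only difference being the Clifford sign conventions of the graded setting, namely $\bar c(\bar v)c(v)=-c(v)\bar c(\bar v)$, $\bar c(\bar v)^2=-|\bar v|^2$, $c(v)^2=|v|^2$, $\bar c(\bar v)^*=-\bar c(\bar v)$ and $c(v)^*=c(v)$. Here one must keep in mind that the convention $g_N\oplus(-f^*g_M)$ on $TN\oplus f^*TM$ makes the Clifford relation on the $M$-factor read $c(v)^2=+g_M(v,v)$, so that the $g_0$-unit section $\nu_f$ still satisfies $c(\nu_f)^2=+1$. With this in hand one checks, exactly as before, that $\chi=c(\nu_N\otimes\nu_f)=\bar c(\nu_N)c(\nu_f)$ is a self-adjoint unitary involution on $S|_{\partial N}$:
\[\chi^*=c(\nu_f)^*\bar c(\nu_N)^*=-c(\nu_f)\bar c(\nu_N)=\bar c(\nu_N)c(\nu_f)=\chi,\qquad \chi^2=-\bar c(\nu_N)^2c(\nu_f)^2=1 .\]

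Next I would record the commutation relations of $\chi$ with the Clifford action of $TN$ along $\partial N$. For a local orthonormal frame $\{\bar e_\alpha:1\le\alpha\le n-1\}$ of $T\partial N$, the graded relations give
\[\chi\,\bar c(\nu_N)=-\bar c(\nu_N)\,\chi ,\qquad \chi\,\bar c(\bar e_\alpha)=\bar c(\bar e_\alpha)\,\chi ,\]
which is the reverse of what happens in Proposition~\ref{prp:elliptic} — the extra sign in $\bar c\,c=-c\,\bar c$ flips both. This reversal is harmless, because ellipticity of a chirality boundary condition only needs $\chi$ to anticommute with the principal symbol of the adapted (tangential) operator: writing $\widehat D=\bar c(\nu_N)(\nabla_{\nu_N}+\mathfrak A)$ near $\partial N$ (the modification term $c(\cdot\otimes V)$ is of order zero and does not enter the symbol), the symbol of $\mathfrak A$ at a covector $\xi$ is proportional to $\bar c(\nu_N)\bar c(\xi^\sharp)$, and the two relations above give
\[\chi\,\bar c(\nu_N)\bar c(\xi^\sharp)=-\bar c(\nu_N)\,\chi\,\bar c(\xi^\sharp)=-\bar c(\nu_N)\bar c(\xi^\sharp)\,\chi .\]
Since $\bigl(\bar c(\nu_N)\bar c(\xi^\sharp)\bigr)^2=|\xi|^2$, the involution $\chi$ swaps the $\pm|\xi|$-eigenspaces of this symbol, so $\Ker(\chi-\epsilon)$ has half the fibre dimension and meets the positive spectral subspace of $\mathfrak A$ trivially for every $\xi\neq0$. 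Thus $\chi$ is a boundary chirality operator in the sense of \cite[Example~7.26]{bar2012boundary} and $\chi s=\epsilon s$ is an elliptic local boundary condition. Since $\chi$ is even for the $\mathbb Z/2$-grading of $S$ while $\widehat D$ is odd, the problem splits along the even/odd decomposition of $S$, and the index that matters is the chiral one, which is well defined by ellipticity.

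The only genuinely delicate point is the sign bookkeeping in the graded Clifford algebra: keeping straight that the $(-f^*g_M)$ convention yields $c(\nu_f)^2=+1$ (hence $\chi^2=+1$ and not $-1$), and noting that the reversal of the $\bar c(\nu_N)$-versus-$\bar c(\bar e_\alpha)$ pattern relative to Proposition~\ref{prp:elliptic} alters only the self-adjointness type of the condition — the $\epsilon=\pm1$ conditions are now self-adjoint for $\widehat D$ rather than being mutually adjoint — and not its ellipticity, since only the anticommutation of $\chi$ with the full tangential symbol $\bar c(\nu_N)\bar c(\xi^\sharp)$ is used.
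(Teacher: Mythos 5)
Your proof is correct and takes essentially the same route as the paper, which asserts this proposition by analogy with Proposition \ref{prp:elliptic} via the chirality criterion of \cite[Example 7.26]{bar2012boundary}: you verify that in the graded conventions $\chi=\bar c(\nu_N)c(\nu_f)$ remains a self-adjoint unitary involution (now anticommuting with $\bar c(\nu_N)$ and commuting with the tangential $\bar c(\bar e_\alpha)$, the reversed pattern), and correctly observe that only the anticommutation of $\chi$ with the tangential principal symbol is needed for ellipticity, the zeroth-order term $c(\cdot\otimes V)$ being irrelevant. One cosmetic slip: $\bigl(\bar c(\nu_N)\bar c(\xi^\sharp)\bigr)^2=-|\xi|^2$, and it is the adapted symbol $\sqrt{-1}\,\bar c(\nu_N)\bar c(\xi^\sharp)$ (your ``proportional to'') that squares to $|\xi|^2$; this does not affect the eigenspace-swapping argument or the conclusion.
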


    Note that $\widehat{D}^*=\widehat{D}$, and the boundary condition is also self-adjoint. Thus, $\ind_\epsilon=0$. Hence, we should consider the operator 
    $$\widehat{D}^0:C^\infty(N,S^0)\to C^\infty(N,S^1),$$
    where $S=S^0\oplus S^1$ is the even-odd grading. 

    \begin{prp}\label{prp:index'}
        The index of 
        $$\begin{cases}
            \widehat D^0s=0 & \text{ on }N,\\
            \chi s= -s & \text{ on }\partial N.
        \end{cases}$$
        is $\deg f$. 
    \end{prp}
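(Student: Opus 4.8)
The plan is to reduce to the metric twisted Dirac operator, observe that its interior index density vanishes because the twisting bundle is flat, and then evaluate the remaining boundary contribution as $\deg f$.

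First, the homotopy $D_t=D-tn\,c(V)$, $0\le t\le 1$, of Proposition~\ref{prp:elliptic} consists of operators that are all odd for the total $\mathbb Z/2$-grading and share the same boundary chirality operator $\chi=\bar c(\nu_N)c(\nu_f)$; hence $\ind(\widehat D^0;\chi=-1)=\ind(D^0;\chi=-1)$, where $D^0\colon C^\infty(N,S^0)\to C^\infty(N,S^1)$ is the metric twisted Dirac operator. As in the proof of Proposition~\ref{prp:index} it is convenient to also deform $\bar g$ and $g_0$ near the boundary so that the collar is a Riemannian product, keeping $\bar g|_{\partial N}$, $g_0|_{\partial M}$, $\nu_N$ and $\nu_{M,g_0}$ — and hence the index — unchanged; then $\mathcal B=0$ and $H_{\partial N}=0$ along the collar.

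Since $g_0$ is flat, the twisting bundle $W:=f^*\slashed{S}_{M,g_0}$ carries a flat connection, and as $M$ is simply connected its holonomy is trivial, so $\mathrm{ch}(W^+\ominus W^-)$ vanishes identically as a differential form. Consequently the interior integrand $\widehat A(TN)\wedge\mathrm{ch}(W^+\ominus W^-)$ in the index formula for the local elliptic (chirality) boundary problem vanishes, and $\ind(D^0;\chi=-1)$ equals a purely boundary quantity, depending locally only on $\partial N$, the metric near $\partial N$, the spin structures, and the map $\nu_f=\nu_{M,g_0}\circ\partial f\colon\partial N\to S^{n-1}\subseteq\mathbb R^n$. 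Equivalently, passing to the double $\widehat N=N\cup_{\partial N}(-N)$ and the doubled map $\widehat f\colon\widehat N\to\widehat M:=M\cup_{\partial M}(-M)$, the standard doubling trick — the $\bar c(\nu_N)$ factor of $\chi$ being exactly the identification used to build $\slashed{S}_{\widehat N}$, the $c(\nu_f)$ factor the clutching of $f^*\slashed{S}_M$ across $\partial N$ — turns the boundary problem into the even part of the twisted Dirac operator on $\slashed{S}_{\widehat N}\hatotimes\widehat f^*\slashed{S}_{\widehat M}$ over the closed manifold $\widehat N$, so that the Atiyah--Singer theorem gives
$$\ind(\widehat D^0;\chi=-1)=\langle \widehat A(T\widehat N)\wedge\widehat f^*\mathrm{ch}(\slashed{S}_{\widehat M}^+\ominus\slashed{S}_{\widehat M}^-),[\widehat N]\rangle ,$$
a characteristic number built from $\deg\widehat f$ and the boundary data of $M$.

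The main obstacle is the evaluation of this contribution, namely showing — with the graded Clifford conventions of \S3, which are essential here — that it equals exactly $\deg f$ rather than a nonzero multiple of it or the wrong sign. The ingredients are: (i) $\deg\widehat f=\deg f$, which follows from $\deg(\partial f)=\deg f$ (the connecting homomorphism in the long exact sequence of the pair $(N,\partial N)$) together with doubling; (ii) the convexity hypothesis, which forces the inner Gauss map $\nu_{M,g_0}\colon\partial M\to S^{n-1}$ to have degree $\pm 1$, so that the topologically relevant clutching datum is the standard spinorial clutching $v\mapsto c(v)$ on $S^{n-1}$, which represents a generator of the relevant $K$-group and accounts for the coefficient $1$; and (iii) a comparison with the model $(B^n\subseteq\mathbb R^n,\ f=\mathrm{id})$, for which the index must come out to $1$, to fix all orientation and grading signs. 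Tracking these signs and the normalization through the graded tensor-product conventions is the only substantial computation.
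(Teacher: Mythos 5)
Your first step---the homotopy $D_t=D-tn\,c(V)$ with the fixed local boundary condition $\chi s=-s$, reducing the index of $\widehat D^0$ to that of the metric operator $D^0$---is exactly what the paper does. The paper then simply quotes \cite[Theorem A.3]{cecchini2024rigidity} (and the remark after it) for $\ind(D^0;\chi=-1)=\deg f$, whereas you attempt to reprove that boundary index theorem by doubling, and this is where the argument breaks. Your displayed identity $\ind(\widehat D^0;\chi=-1)=\langle \widehat A(T\widehat N)\wedge\widehat f^*\mathrm{ch}(\slashed{S}^+_{\widehat M}\ominus\slashed{S}^-_{\widehat M}),[\widehat N]\rangle$ cannot be right: since $\widehat M\cong S^n$ with $n$ even, the right-hand side evaluates to $\pm\deg f\cdot\chi(S^n)=\pm 2\deg f$, and already in the model case $N=M=B^n$, $f=\id$ (where $S\cong\Lambda^*T^*N$ and $\chi=\mp 1$ become the absolute/relative conditions for $d+d^*$) the left-hand side is $\chi(B^n)=1$ while the right-hand side is $\pm 2$. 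What the doubling trick actually yields is that the Atiyah--Singer number of the glued operator on $\widehat N$ equals the \emph{sum} of the indices of the two complementary chirality problems ($\chi=-1$ on $N$ together with the induced condition on the reflected copy); to extract the single summand $\deg f$ you would still need either a symmetry identifying the two contributions---not available for an arbitrary $f$ of nonzero degree, which need not intertwine the reflections of $\widehat N$ and $\widehat M$---or an independent computation of the chiral boundary index, which is precisely the content of the cited Theorem A.3.

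Moreover, the part you defer as ``the only substantial computation''---fixing the coefficient and sign via the degree of the Gauss map and a model comparison---is in fact the substance of the proposition. A model comparison only pins down the answer after one has shown that the index depends on the data only through $\deg f$ (a relative-index/excision statement for these local boundary conditions), which you assert but do not establish; the observations that $\deg\partial f=\deg f$ and that convexity makes the Gauss map a degree-one diffeomorphism are fine but do not substitute for that step. So while your reduction to the flat-twist operator coincides with the paper's route, the evaluation of the boundary contribution has a genuine gap, and the one concrete formula you give would produce $2\deg f$ rather than $\deg f$.
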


    \begin{proof}
        Consider a family of elliptic boundary value problems defined by 
        $$D_t=D-tnc(V), \quad 0\le t\le 1.$$
        Then $D_0=D$ and $D_1=\widehat{D}$. By homotopy invariance we know that $\ind D^0=\ind \widehat D^0$. According to \cite[Theorem A.3]{cecchini2024rigidity} and the remark after, the index of $$\begin{cases}
            D^0s=0 & \text{ on }N,\\
            \chi s= -s & \text{ on }\partial N.
        \end{cases}$$
        is $\deg f$. 
    \end{proof}

    Therefore, we can complete the proof in the even dimensional case. 

    \begin{proof}[Proof of Theorem \ref{thm:main} (even dimension).]
        By Proposition \ref{prp:index'}, 
        $$\begin{cases}
            \widehat Ds=0 & \text{ on }N,\\
            \chi s= -s & \text{ on }\partial N.
        \end{cases}$$
        has a nontrivial solution. Then we apply Theorem \ref{thm:rigidity} to obtain the rigidity. 
    \end{proof}

    \bibliographystyle{plain}
    \bibliography{ref}
\end{document}